\documentclass{amsart}
\usepackage{enumerate,enumitem}
\usepackage{amsthm, amssymb, amsmath}
\usepackage[initials, non-compressed-cites]{amsrefs}
\usepackage{graphicx, tikz}
\usetikzlibrary{calc,patterns,angles, quotes}
\usepackage{mathrsfs}
\usepackage{mathtools}
\usepackage{subfig}
\usepackage{tabularx}
\usepackage{fancyhdr}
\usepackage{xcolor}
\usepackage{hyperref}

\hypersetup{
    colorlinks,
    linkcolor={red!75!black},
    citecolor={blue!70!black},
    urlcolor={blue!80!black}
}

\setlength{\parindent}{.3cm}
\setlength{\parskip}{.101cm}

\newtheorem{theorem}{Theorem}[section]

\newtheorem{cor}[theorem]{Corollary}
\newtheorem*{theo*}{Theorem}
\newtheorem*{cor*}{Corollary}
\newtheorem{lm}[theorem]{Lemma}

\theoremstyle{definition}

\newtheorem*{rem*}{Remark}

\newtheorem{ex}[theorem]{Example}

\newcommand{\D}{\mathbb{D}}
\newcommand{\C}{\mathbb{C}}
\newcommand{\R}{\mathbb{R}}
\newcommand{\N}{\mathbb{N}}
\renewcommand{\H}{\mathbb{H}}

\renewcommand{\epsilon}{\varepsilon}

\numberwithin{equation}{section}

\title[Monotonicity of projections]{Monotonicity properties of hyperbolic projections in holomorphic iteration}
\author[A. Christodoulou]{Argyrios Christodoulou}
\address{Department of Mathematics, Aristotle University of Thessaloniki, 54124, Thessaloniki, Greece}
\email{argyriac@math.auth.gr}
\author[K. Zarvalis]{Konstantinos Zarvalis$^1$}
\address{Department of Mathematics, Aristotle University of Thessaloniki, 54124, Thessaloniki, Greece}
\email{zarkonath@math.auth.gr}
\subjclass[2020]{Primary: 37F44, 30F45; Secondary: 30D05, 51M10}
\keywords{Holomorphic iteration; hyperbolic geometry; hyperbolic projection}
\thanks{$^1$ Partially supported by Junta de Andaluc\'{i}a, grant number QUAL21 005 USE}
\begin{document}
	
	\begin{abstract}
		We consider hyperbolic projections of orbits of holomorphic self-maps of the unit disc, onto curves landing on the unit circle with a given angle. We show that under certain, necessary, assumptions, the projections exhibit monotonicity properties akin to those present in continuous dynamics. Our techniques are purely hyperbolic-geometric in nature and provide the general framework for analysing projections of arbitrary sequences onto curves.
	\end{abstract}
	
	\maketitle

\section{Introduction}
This article explores the monotonicity properties of hyperbolic projections of holomorphic orbits in the unit disc of the complex plane, onto curves landing on the unit circle. Our analysis is inspired by the theory of continuous holomorphic semigroups, where examining the behaviour of projections of trajectories has been at the forefront of many advancements in the last decade \cites{BK, Bracci-Speeds, BCK, Cordella1, Cordella2, KZ, Zar-Tangential}. In fact, our main result, Theorem \ref{thm: main result}, can be thought of as a strong, discrete version of the remarkable monotonicity property for the trajectories of continuous semigroups proved recently by Betsakos and Karamanlis \cite{BK}. Surprisingly, this strong monotonicity is unique both to discrete iteration and to the type of functions considered in Theorem \ref{thm: main result}, as shown by our examples in Section \ref{sect: examples}. 

The results presented here are part of a host of recent advances in the theory of discrete iteration in the unit disc that aim to shed light into the precise nature with which a sequence of iterates converges to a boundary attracting fixed point. These include a new version of the Denjoy--Wolff Theorem for arbitrary simply connected domains \cite{BB}, an analysis of the slope problem in parabolic iteration \cites{CCZRP 1, CCZRP 2}, and purely geometric versions of classical results \cite{Beardon-Minda}, to name a few. Also, of particular interest are the novel connections between discrete iteration and other areas of holomorphic dynamics, such as continuous semigroups \cites{Bracci-Roth, CGDM}, and dynamics of entire functions \cites{Evdoridou, BFJK}.

\medskip

Let us start by writing $\D$ for the open unit disc in the complex plane $\C$, which we endow with the hyperbolic metric $d_\D$. The classical Denjoy--Wolff Theorem \cite{Wolff} states that if $f\colon\D\to\D$ is a holomorphic function with no fixed points in $\D$, then there exists a unique point $\tau\in\partial\D$, called the \emph{Denjoy--Wolff point of $f$}, so that the sequence of iterates of $f$,
\[
f^n\vcentcolon=\underbrace{f\circ f \circ \cdots \circ f}_{n \text{ times}},
\]
converges uniformly on compact sets to the constant function $\tau$. Moreover, the \emph{angular derivative} $ f'(\tau)$ of $f$ at $\tau$ exists and satisfies $f'(\tau)\in(0,1]$. When $f'(\tau)<1$ the holomorphic function $f$ is called \emph{hyperbolic}, and \emph{parabolic} otherwise.

We say that a curve $\gamma\colon [0,+\infty)\to\D$ \emph{lands} at a point $\zeta\in\partial\D$, if $\gamma(t)$ converges to $\zeta$ as $t\to +\infty$, in the Euclidean metric of $\C$. Given $z_0\in\D$, a point $\pi_\gamma(z_0)\in\D$ will be called \emph{a projection of $z_0$ onto $\gamma$} if $\pi_\gamma(z_0)=\gamma(t_0)$, for some $t_0\geq0$, and
\[
d_\D(z_0,\gamma(t_0))=\inf\{d_\D(z_0,\gamma(t))\colon t\in[0,+\infty)\}.
\]
Note that when $\gamma$ lands at a point on the boundary, projections of any point $z\in\D$ onto $\gamma$ always exist, but are not necessarily unique. It is well-known and easy to prove, however, that when $\gamma$ is a geodesic of the hyperbolic metric, every point of the disc has a unique projection onto $\gamma$.

\medskip

Our main objective is to establish the following theorem:

\begin{theorem}\label{thm: main result}
    Let $f\colon\D\to\D$ be a hyperbolic map with Denjoy--Wolff point $\tau$, and $\gamma\colon [0,+\infty)\to\D$ a curve landing at $\tau$, satisfying
    \[
    \lim_{t\to+\infty}\mathrm{arg}(1-\overline{\tau}\gamma(t))=\theta\in\left(-\tfrac{\pi}{2},\tfrac{\pi}{2}\right).
    \]
    Fix $z\in\D$ and let $\pi_\gamma(f^n(z))$ be a projection of $f^n(z)$ onto $\gamma$, for $n\in\N$. Then, for any $w\in\D$, the sequence $\{d_\D(w,\pi_\gamma(f^n(z)))\}$ is eventually strictly increasing.
\end{theorem}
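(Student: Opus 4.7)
My plan is to transfer the problem to the upper half-plane $\H$ via a M\"obius isometry mapping $\tau$ to $\infty$. The landing hypothesis on $\gamma$ then becomes $\arg\gamma(t)\to\alpha:=\tfrac{\pi}{2}-\theta\in(0,\pi)$, so $\gamma$ approaches $\infty$ along the ray of argument $\alpha$. Since $f$ is hyperbolic, classical K\"onigs linearization gives $f^n(z)\to\infty$ non-tangentially with a limit angle $\arg f^n(z)\to\phi_0\in(0,\pi)$, and moreover $\Im f^{n+1}(z)/\Im f^n(z)\to e^c$, where $c:=-\log f'(\tau)>0$ is the hyperbolic step. An elementary expansion of $\cosh d_\H(w,p)=1+|w-p|^2/(2\Im w\,\Im p)$ yields the asymptotic
\[
d_\H(w,p)=\log\Im p-2\log\sin\beta-\log\Im w+o(1)\qquad\text{as }p\to\infty\text{ with }\arg p\to\beta\in(0,\pi).
\]
Applied to the orbit, this already gives the baseline $d_\H(w,f^{n+1}(z))-d_\H(w,f^n(z))\to c>0$, which I plan to transfer to the projections.

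Since both $\{f^n(z)\}$ and $\gamma$ lie eventually in Stolz regions at $\infty$, they remain within bounded hyperbolic distance of any geodesic landing at $\infty$; hence $d_\H(f^n(z),\gamma)\le M$ uniformly in $n$. Any projection $p_n$ therefore exists with $d_\H(f^n(z),p_n)\le M$, and since $p_n\in\gamma$ we obtain $\arg p_n\to\alpha$. The crucial technical step is to show that $|p_n|/|f^n(z)|\to s_0$ for some $s_0>0$. For this I would rescale by the isometry $w\mapsto w/|f^n(z)|$ of $\H$: the rescaled orbit point tends to $e^{i\phi_0}$, and on compact subsets of $\H\setminus\{0\}$ the rescaled curve $\gamma/|f^n(z)|$ converges to the ray $R=\{se^{i\alpha}:s>0\}$. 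A stability argument for hyperbolic projections onto limits of curves then identifies the rescaled projection $p_n/|f^n(z)|$ with the unique nearest point $s_0 e^{i\alpha}$ of $e^{i\phi_0}$ on $R$; uniqueness here follows from a direct analysis of $s\mapsto d_\H(e^{i\phi_0},se^{i\alpha})$.

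With $|p_n|/|f^n(z)|\to s_0$ and $\arg p_n\to\alpha$, one deduces that $\log\Im p_n-\log\Im f^n(z)$ converges to some constant $L$, and hence $\log\Im p_{n+1}-\log\Im p_n=\log\Im f^{n+1}(z)-\log\Im f^n(z)+o(1)\to c$. Substituting into the asymptotic formula for both $p_n$ and $p_{n+1}$ (where the $-2\log\sin\alpha$ constants cancel), one concludes $d_\H(w,p_{n+1})-d_\H(w,p_n)=c+o(1)$, which is eventually strictly positive. The main obstacle I foresee is the stability step: projections onto $C^0$-close curves can in general jump, and this is ruled out here only because the non-tangentiality of $\gamma$ forces the limit object to be the transverse ray $R$ and guarantees uniqueness of the projection onto it. The necessity of the non-tangential hypothesis on $\gamma$ is presumably what the examples in Section~\ref{sect: examples} illustrate.
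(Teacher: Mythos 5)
Your outline is correct and would prove the theorem, but it follows a genuinely different route from the paper's. The paper's engine is Theorem \ref{lm:orthogonal lemma 2}: for \emph{any} sequence tending to the Denjoy--Wolff point, projections onto two curves landing with the same slope are asymptotically close; its proof in particular shows $d_\H\bigl(\pi_\gamma(z_n),\lvert z_n\rvert e^{i\theta}\bigr)\to 0$, which is exactly the fact you extract by rescaling (your $p_n/\lvert f^n(z)\rvert\to s_0e^{i\alpha}$, where in fact $s_0=1$ by Lemma \ref{lm: properties of metric in H}\,(1): the nearest point of $e^{i\phi_0}$ on the ray of angle $\alpha$ has the same modulus). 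The paper then replaces $\gamma$ by the half-line through $w$ of angle $\theta$, uses the explicit projection onto it, and combines Lemma \ref{lm:orthogonal speed lemma} (monotonicity of $\lvert f^n(z)-w\rvert$ via the angular derivative) with Lemmas \ref{lm: differences lemma} and \ref{lm:sequences lemma in H} and the positive step of Theorem \ref{thm: step and slope for hyperbolic maps} to get a positive $\liminf$ of the increments. You instead use a blow-up by $\lvert f^n(z)\rvert$ together with a Busemann-type expansion of $d_\H(w,\cdot)$, reducing everything to $\log \mathrm{Im}\, p_n$; this is more self-contained and yields the sharper conclusion that the increments actually converge to $\log\bigl(1/f'(\tau)\bigr)$, whereas the paper's comparison theorem is heavier machinery but applies to arbitrary sequences and gives extra results such as Corollary \ref{cor:projections with different slopes}. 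Two points to tighten when writing it up: (i) $c=-\log f'(\tau)$ is not literally the hyperbolic step (which depends on the limit slope $\phi_0$), and the relevant fact $\log\mathrm{Im}\,f^{n+1}(z)-\log\mathrm{Im}\,f^n(z)\to\log\bigl(1/f'(\tau)\bigr)>0$ comes from Julia--Carath\'eodory together with the fixed slope of the orbit (Theorem \ref{thm: step and slope for hyperbolic maps}), not from K\"onigs linearization, which does not apply at a boundary fixed point; (ii) in the stability step, record that since $d_\H(f^n(z),p_n)\le M$ and the rescaling is an isometry, the rescaled projections stay in a fixed compact subset of $\H$, so every subsequential limit lies on the ray $R$, and comparison with points of $\gamma$ of modulus $s\lvert f^n(z)\rvert$ (which exist by continuity and satisfy $\arg\to\alpha$) shows that limit realizes $d_\H(e^{i\phi_0},R)$; uniqueness of the minimizer then pins it down and rules out the ``jumping'' you worry about. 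With those details supplied the argument is complete.
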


For a curve $\gamma\colon[0,+\infty)\to\D$ landing at $\tau\in\partial\D$, the cluster set of $\mathrm{arg}(1-\overline{\tau}\gamma(t))$ is often called the \emph{slope} of the curve. So in Theorem \ref{thm: main result} we essentially assume that $\gamma$ lands on $\tau$ non-tangentially (i.e. stays within a Stolz angle) and with a specified slope. The archetypal examples of such curves are the hyperbolic geodesics of $\D$ landing at $\tau$, for which $\mathrm{arg}(1-\overline{\tau}\gamma(t))$ tends to zero. Therefore, as a special case of Theorem \ref{thm: main result}, we have that the distance between any point $w\in\D$ and the projections of a hyperbolic orbit $\{f^n(z)\}$ onto any geodesic is eventually strictly increasing (see Corollary \ref{cor:orthogonal speed corollary}).

The main advantage of our result, however, is the fact that it allows for projections onto \emph{any} curve landing at the Denjoy--Wolff point with a specified slope. This may even include behaviours typically considered ``pathological", such as the curve having (diminishing) oscillations or self-intersections.

Moreover, Theorem \ref{thm: main result} allows for complete freedom for the choice of each projection $\pi_\gamma(f^n(z))$, of which there might be infinitely many. Finally, it is worth mentioning that previous monotonicity results of this type---to be described shortly---consider the distance of the projections $\pi_\gamma(f^n(z))$ from $z$, i.e. the case $w=z$ in Theorem \ref{thm: main result}. The fact that we can choose the base-point $w$ arbitrarily encapsulates the intuitive fact of hyperbolic geometry that an ``observer" positioned at $w\in\D$ will eventually perceive the sequence $\{\pi_\gamma(f^n(z))\}$ as moving away from them monotonically, regardless of the position of $w$.

\medskip

Our proof is based on a thorough analysis of the hyperbolic geometry behind projections of sequences onto curves, carried out in Section \ref{sect:projections}. The main technical result of this work is Theorem \ref{lm:orthogonal lemma 2}, which roughly states that if two curves $\gamma_1$ and $\gamma_2$ land at the same boundary point, with the same slope, then the projections $\pi_{\gamma_1}(z_n)$ and $\pi_{\gamma_2}(z_n)$ of a sequence $\{z_n\}\subset\D$ onto $\gamma_1$ and $\gamma_2$, respectively, are asymptotically close to one another. This result might be of independent interest since it implies that the projections onto an arbitrary curve can be controlled by projections onto simple, well-behaved curves landing with the same slope. In fact, we employ this technique for the proof of Theorem \ref{thm: main result} in Section \ref{sect: monotonicity}. Also, this result allows us to asymptotically correlate the projections onto curves landing with different angles, as shown in Corollary \ref{cor:projections with different slopes}.

\medskip

In order to describe the connection between Theorem \ref{thm: main result} and the theory of continuous semigroups of holomorphic functions, let us consider a semigroup $\phi_t\colon \D\to\D$, $t\geq0$, with Denjoy--Wolff point $\tau\in\partial\D$ (see the discussion in Example \ref{ex: hyperbolic semigroup} for the precise definition). Also take a geodesic $\gamma\colon [0,+\infty)\to\D$ of the hyperbolic metric, emanating from a point $z\in\D$ and landing at $\tau$.

The continuous function $v^z\colon [0,+\infty)\to[0,+\infty)$ with
\[
v^z(t)= d_\D(z,\pi_\gamma(\phi_t(z))),
\]
is called the \emph{orthogonal speed} of the semigroup. This was first considered by Bracci in his seminal paper \cite{Bracci-Speeds}, where he proved several key properties of $v^z$ and computed the rate with which $v^z(t)$ tends to infinity in various cases. His work has been the starting point for a plethora of modern results that investigate the properties of the speeds of convergence for the trajectories of semigroups, such as \cites{BCK,Cordella1, Cordella2, KZ, Zar-Tangential}.

Recently, Betsakos and Karamanlis \cite[Theorem 1.1]{BK} showed that the orthogonal speed is a strictly increasing function, for any semigroup with a boundary Denjoy--Wolff point. This result serves as the main motivation behind Theorem \ref{thm: main result}, since the sequence $\{d_\D(w,\pi_\gamma(f^n(z)))\}$ considered in Theorem~\ref{thm: main result} is an analogue of the orthogonal speed in discrete iteration. In fact, since our theorem allows for projections onto curves landing at $\tau$ non-tangentially, $\{d_\D(w,\pi_\gamma(f^n(z)))\}$ can be thought of as the \emph{non-tangential speed} of the map $f$. 

In Section \ref{sect: examples} we provide examples which show that the strict monotonicity described by Theorem~\ref{thm: main result} may fail for semigroups and for parabolic maps, meaning that it is unique to hyperbolic self-maps of $\D$. Moreover, we show that the term ``\emph{eventually}" cannot be omitted, and that the landing slope $\theta$ of the curve $\gamma$ cannot be considered to be $\pm\tfrac{\pi}{2}$. Hence, when considering the strict monotonicity of the non-tangential speed, our result is the best possible. 

\medskip

As an immediate corollary of Theorem \ref{thm: main result}, we obtain the following global monotonicity for hyperbolic maps. 

\begin{cor}\label{cor: main cor}
    Let $f\colon\D\to\D$ be a hyperbolic map with Denjoy--Wolff point $\tau$. For any $z,w\in\D$, the sequence $\{d_\D(w,f^n(z))\}$ is eventually strictly increasing.
\end{cor}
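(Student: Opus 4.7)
The plan is to reduce Corollary~\ref{cor: main cor} to Theorem~\ref{thm: main result} by constructing a curve $\gamma\colon[0,+\infty)\to\D$ which passes through every iterate $f^n(z)$ and lands at $\tau$ with an admissible slope. Once such a $\gamma$ is produced, $\pi_\gamma(f^n(z))=f^n(z)$ is a valid choice of projection (achieving the trivial distance $0$), so Theorem~\ref{thm: main result} specialises exactly to the conclusion of the corollary.

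I would begin by invoking the classical fact that a hyperbolic orbit approaches $\tau$ with a well-defined slope:
\[
\lim_{n\to\infty}\mathrm{arg}(1-\overline{\tau}f^n(z))=\theta\in\left(-\tfrac{\pi}{2},\tfrac{\pi}{2}\right).
\]
This follows from Valiron's intertwining $\sigma\circ f=\lambda^{-1}\sigma$ (with $\lambda=f'(\tau)\in(0,1)$) combined with the Julia--Wolff--Carath\'eodory comparison of the Valiron function $\sigma$ with the Cayley transform $w\mapsto(1+\overline{\tau}w)/(1-\overline{\tau}w)$; along the way, one also obtains that $|1-\overline{\tau}f^{n+1}(z)|/|1-\overline{\tau}f^{n}(z)|\to\lambda$, so that consecutive tails have comparable Euclidean size.

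I would then take $\gamma$ to be the Euclidean polygonal curve through the orbit,
\[
\gamma(n+s)=(1-s)f^n(z)+sf^{n+1}(z),\qquad s\in[0,1],\ n\in\N,
\]
which lies in $\D$ by convexity, passes through each $f^n(z)$, and lands at $\tau$ since its vertices do. Writing $a_n=1-\overline{\tau}f^n(z)=r_ne^{i\theta_n}$ with $\theta_n\to\theta$ and $r_{n+1}/r_n\to\lambda>0$, the key identity
\[
1-\overline{\tau}\gamma(n+s)=(1-s)a_n+sa_{n+1}
\]
expresses $1-\overline{\tau}\gamma(t)$ as a convex combination of two quantities with comparable moduli and arguments tending to $\theta$; a short elementary estimate then yields $\mathrm{arg}(1-\overline{\tau}\gamma(t))\to\theta$. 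Thus $\gamma$ satisfies the hypotheses of Theorem~\ref{thm: main result}, and the choice $\pi_\gamma(f^n(z))=f^n(z)$ delivers the desired eventual strict monotonicity of $\{d_\D(w,f^n(z))\}$ for every $w\in\D$. The only non-elementary ingredient is the existence of the asymptotic slope for hyperbolic orbits; once that classical fact is invoked, the rest of the argument is routine.
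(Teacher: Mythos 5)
Your proposal is correct and is essentially the paper's own argument: the paper likewise joins consecutive iterates by Euclidean segments (working in the half-plane), uses the known slope convergence of hyperbolic orbits (its Theorem \ref{thm: step and slope for hyperbolic maps}, which you would rederive via Valiron) to verify the slope hypothesis, and then applies the main theorem with $\pi_\gamma(f^n(z))=f^n(z)$. The only differences are cosmetic: disc versus half-plane coordinates and a bit more detail in checking the slope of the polygonal curve.
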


In the language of continuous semigroups introduced by Bracci in \cite{Bracci-Speeds}, the sequence considered in Corollary \ref{cor: main cor} is called \emph{total speed}, whenever $w=z$. In \cite[Theorem 1.2]{BK} it was shown that there exist semigroups for which the total speed is not eventually increasing. In Example \ref{ex:corollary counterexample} we describe how \cite[Theorem 1.2]{BK} can be used in order to show that the sequence $\{d_\D(w,f^n(z))\}$ may not be eventually increasing if $f$ is parabolic, and so the assumption that $f$ is hyperbolic in Corollary \ref{cor: main cor} is necessary. 

\medskip

In the sixth, and final, section of this article, we discuss questions that arise from our work as well as possible generalisations of our results. 

\section{Preliminaries}\label{sect: prelim}

\subsection{Hyperbolic Geometry}

We start by presenting the basics of hyperbolic geometry. For an in depth treatise on the subject, we refer to the excellent books \cite[Chapter 1]{Abate}, \cite[Chapter 7]{Beardon} and \cite[Chapter 5]{BCDM-Book}.

The \emph{hyperbolic metric} of $\D$ is defined as
\begin{equation*}
	d_\D(z,w)=\inf\limits_{\gamma}\int\limits_{\gamma}\frac{|dz|}{1-|z|^2},
\end{equation*}
where the infimum is taken over all piecewise $C^1$-smooth curves $\gamma:[0,1]\to\D$ satisfying $\gamma(0)=z$ and $\gamma(1)=w$. It is easy to show that $d_\D$ is in fact a metric, and that the metric space $(\D,d_\D)$ is complete. 

The Riemann mapping theorem allows us to transfer the above notions to any simply connected domain as follows. Let $\Omega\subsetneq\C$ be a simply connected domain and $f:\Omega\to\D$ a Riemann mapping. The hyperbolic distance in $\Omega$ is defined by
\begin{equation}\label{eq:hyperbolic distance in domain}
	d_\Omega(z,w)\vcentcolon=d_\D(f(z),f(w)), \quad z,w\in\Omega.
\end{equation}
It turns out that this definition is independent of the choice of the Riemann mapping $f$. More importantly, this immediately shows that the hyperbolic metric is a conformally invariant quantity. 

As final pieces of notation, we use $D_\Omega(w,R)$ to describe the open hyperbolic disc of $\Omega$, centred at $w$ and of radius $R$; that is the set
\begin{equation}\label{eq:hyperbolic disk}
	D_\Omega(w,R)\vcentcolon=\{z\in\Omega:d_\Omega(z,w)<R\}.
\end{equation}

Also, if $\gamma\colon[0,+\infty)\to\Omega$ is a curve and $z\in\Omega$, for convenience we write
\begin{equation}\label{eq: distance from curve}
    d_\Omega(z,\gamma)\vcentcolon=\inf\{d_\Omega(z,\gamma(t))\colon t\in[0,+\infty)\}.
\end{equation}

\subsection{The right half-plane}

The conformal invariance of the hyperbolic metric allows us to transfer our arguments from the unit disc to any simply connected domain. In fact, using the Cayley transform $T\colon \D\to\H$ with $T(z)=\frac{1+z}{1-z}$, we can pass to the right half-plane $\H=\{w\in\C:\textup{Re}w>0\}$. Working in $\H$ instead of $\D$ simplifies many of the computations involving the hyperbolic metric, and we will do so for the rest of this article.

For any $z,w\in\H$, the hyperbolic metric of $\H$ is given by the formula
\begin{equation}\label{eq:hyperbolic distance in H}
    d_\H(z,w)=\frac{1}{2}\log\frac{1+\left\lvert \frac{z-w}{z+\overline{w}}\right\rvert}{1-\left\lvert \frac{z-w}{z+\overline{w}}\right\rvert}.
\end{equation}

The quantity 
\begin{equation}\label{eq: pseudo-hyperbolic metric}
    \rho_\H(z,w)= \left\lvert \frac{z-w}{z+\overline{w}}\right\rvert, \quad \text{for } z,w\in\H,
    \end{equation}
is called the \emph{pseudo-hyperbolic metric} of $\H$. The pseudo-hyperbolic metric is indeed a metric on $\H$, with $\rho_\H(z,w)\in[0,1)$ for all $z,w\in\H$. Simple calculations show that $\rho_\H$ satisfies the following handy formula
\begin{equation}\label{eq: useful pseudo-hyp formula}
    1-\rho_\H(z,w)^2=\frac{4\ \mathrm{Re}z \ \mathrm{Re}w }{\lvert z+\overline{w}\rvert^2}, \quad \text{for any } z,w\in\H.
\end{equation}

Another useful formula, which can be easily obtained from \eqref{eq:hyperbolic distance in H} and \eqref{eq: useful pseudo-hyp formula}, states that for any $z,w\in\H$, we have 
\begin{equation}\label{eq:cosh in H}
    \cosh d_\H(z,w)= \frac{\lvert z+\overline{w}\rvert}{2\ \sqrt{\mathrm{Re} \ z}\ \sqrt{\mathrm{Re} \ w}}.
\end{equation}

The hyperbolic geodesics in $\H$ are the Euclidean straight half-lines and semicircles lying in $\H$ that are perpendicular to the imaginary axis.

\medskip

Let us now describe how our main objects of study are modified when moving to the right half-plane. First, any holomorphic function $f\colon \D \to \D$ induces a holomorphic function $F\colon \H \to \H$, by defining $F= T\circ f \circ T^{-1}$, where $T$ is the Cayley transform mentioned above. When $f$ has Denjoy--Wolff point $\tau\in\partial\D$, we can pre-compose $T$ with a rotation of $\D$ in order to obtain that the Denjoy--Wolff point of $F$ is the point at infinity. That is, the sequence of iterates $F^n$ of $F$ converges uniformly on compact sets to the point at infinity, in the Euclidean metric. Moreover, $F$ has an angular derivative at infinity, given by 
\[
F'(\infty)=\frac{1}{f'(\tau)}\in[1,+\infty).
\]
Hence, the holomorphic map $F$ is hyperbolic if $F'(\infty)>1$ and parabolic otherwise. Furthermore, by the Julia--Carath\'eodory Theorem \cite[Proposition 2.3.1]{Abate} we have that
\begin{equation}\label{eq: angular derivative}
    F'(\infty)=\angle\lim_{z\to\infty}\frac{F(z)}{z},
\end{equation}
where the angular notation in the limit simply means that $z$ tends to $\infty$ through a sector $\{z\in\C\colon \lvert \mathrm{arg}\ z\rvert<\phi\}$, for some $\phi\in(-\tfrac{\pi}{2},\tfrac{\pi}{2})$. For more information on the angular derivative, we refer to \cite[Chapter 4]{Pommerenke} and \cite[Section 2.3]{Abate}.

Suppose, now, that $\gamma\colon[0,+\infty)\to\D$ is a curve in $\D$, landing at a point $\zeta\in\partial\D$. Then, again using an appropriate version of $T$, we have that the induced curve $\Gamma\colon [0,+\infty) \to \H$ \emph{lands at infinity}; i.e.
\[
\lim_{t\to+\infty}\Gamma(t)=\infty.
\]
In this setting we have that if
\[
\lim_{t\to+\infty}\mathrm{arg}(1-\overline{\tau}\gamma(t))=\theta\in\left[-\tfrac{\pi}{2},\tfrac{\pi}{2}\right],
\]
then
\[
\lim_{t\to+\infty}\mathrm{arg}(\Gamma(t))=\theta.
\]
So, examining the angle with which curves land at the boundary is a far simpler endeavour in $\H$.

\medskip

Since all the quantities considered in our main result are conformally invariant, we can restate Theorem \ref{thm: main result} in the setting of right half-plane as follows:

\begin{theorem}\label{thm: main monotonicity result}
    Let $f\colon\H\to\H$ be a hyperbolic map with Denjoy--Wolff point $\infty$, and $\gamma\colon [0,+\infty)\to\H$ a curve landing at $\infty$, satisfying
    \[
    \lim_{t\to+\infty}\mathrm{arg}(\gamma(t))=\theta\in\left(-\tfrac{\pi}{2},\tfrac{\pi}{2}\right).
    \]
    Fix $z\in\D$ and let $\pi_\gamma(f^n(z))$ be a projection of $f^n(z)$ onto $\gamma$, for $n\in\N$. Then, for any $w\in\H$, the sequence $\{d_\H(w,\pi_\gamma(f^n(z)))\}$ is eventually strictly increasing.
\end{theorem}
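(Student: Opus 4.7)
The plan is to replace the curve $\gamma$ by a simpler reference curve that lands at $\infty$ with the same slope $\theta$, and for which projections can be computed explicitly. Concretely, consider the ray
\[
\sigma_\theta\colon[0,+\infty)\to\H,\qquad \sigma_\theta(t)=(1+t)e^{i\theta}.
\]
Since $\arg\sigma_\theta\equiv\theta$ and $\sigma_\theta(t)\to\infty$, the pair $(\gamma,\sigma_\theta)$ satisfies the hypotheses of Theorem~\ref{lm:orthogonal lemma 2}, so for any choice of projections,
\[
d_\H\bigl(\pi_\gamma(f^n(z)),\pi_{\sigma_\theta}(f^n(z))\bigr)\longrightarrow 0.
\]
By the triangle inequality, it therefore suffices to find a constant $c>0$ such that the increments of $\{d_\H(w,\pi_{\sigma_\theta}(f^n(z)))\}$ are eventually at least $c$, because an $o(1)$ perturbation of such a sequence is still eventually strictly increasing.

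Making the projections onto $\sigma_\theta$ explicit is a short calculation. From \eqref{eq:cosh in H} one obtains
\[
\cosh^2 d_\H(w,se^{i\theta})=\frac{|w|^2+s^2+2s\,\mathrm{Re}(we^{i\theta})}{4s\cos\theta\,\mathrm{Re}\,w},\qquad s>0,
\]
and the right-hand side is strictly convex in $s$ on $(0,+\infty)$, with unique minimum at $s=|w|$. Therefore, writing $r_n\vcentcolon=|f^n(z)|$, as soon as $r_n\geq 1$---which holds for all large $n$---the unique projection of $f^n(z)$ onto $\sigma_\theta$ is the point $r_n e^{i\theta}$. Balancing orders in the identity above as $r\to+\infty$ yields the asymptotic
\[
d_\H(w,r e^{i\theta})=\tfrac12\log r+C(w,\theta)+o(1),
\]
so $r\mapsto d_\H(w,re^{i\theta})$ is strictly increasing on $[|w|,+\infty)$ with asymptotic slope $\tfrac12$ with respect to $\log r$.

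The remaining ingredient is the hyperbolicity of $f$. With $F'(\infty)=\lambda>1$ and the standard fact that hyperbolic orbits converge non-tangentially to the Denjoy--Wolff point, \eqref{eq: angular derivative} applied along $\{f^{n-1}(z)\}$ yields $f^n(z)/f^{n-1}(z)\to\lambda$. In particular $r_n/r_{n-1}\to\lambda>1$, so $r_n\to+\infty$ strictly monotonically for $n$ large, and the asymptotic expansion above gives
\[
d_\H\bigl(w,r_{n+1}e^{i\theta}\bigr)-d_\H\bigl(w,r_n e^{i\theta}\bigr)=\tfrac12\log(r_{n+1}/r_n)+o(1)\longrightarrow\tfrac12\log\lambda>0.
\]
This supplies the uniform lower bound $c=\tfrac14\log\lambda$ on the increments of $\{d_\H(w,\pi_{\sigma_\theta}(f^n(z)))\}$, and combining it with the $o(1)$ comparison from Theorem~\ref{lm:orthogonal lemma 2} transfers eventual strict monotonicity to $\{d_\H(w,\pi_\gamma(f^n(z)))\}$.

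The genuine obstacle is Theorem~\ref{lm:orthogonal lemma 2} itself: showing that any two curves landing with the same slope at $\infty$ have asymptotically coincident projections, uniformly over admissible choices, is where the bulk of the hyperbolic-geometric work lies and is carried out in Section~\ref{sect:projections}. Once it is in hand, the argument above is a clean computation on the reference ray combined with the Julia--Wolff--Carath\'eodory asymptotic $f^n(z)/f^{n-1}(z)\to\lambda$. Both hypotheses of the theorem enter essentially: hyperbolicity forces $\lambda>1$, which is the source of the strictly positive limiting increment, while $\theta\in(-\tfrac\pi2,\tfrac\pi2)$ ensures $\cos\theta>0$ and hence the non-degeneracy of the denominator in the projection formula, in agreement with the sharpness examples of Section~\ref{sect: examples}.
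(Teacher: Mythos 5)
Your proposal is correct, and its skeleton coincides with the paper's: both proofs invoke Theorem~\ref{lm:orthogonal lemma 2} to replace $\gamma$ by a straight ray of slope $\theta$, and then show that the increments of the distances to the projections onto that ray have a strictly positive lower limit, so that the $o(1)$ comparison cannot destroy eventual strict monotonicity. Where you diverge is in the second half. The paper takes the ray $\eta(t)=w+te^{i\theta}$ through the base point, translates it to $\{te^{i\theta}\colon t>0\}$, and then reduces everything to distances along the positive real axis: Lemma~\ref{lm: differences lemma} removes the angle $\theta$, Lemma~\ref{lm:orthogonal speed lemma} makes $r_n=\lvert f^n(z)-ic\rvert$ eventually increasing, and the positive hyperbolic step from Theorem~\ref{thm: step and slope for hyperbolic maps}~(1) together with Lemma~\ref{lm:sequences lemma in H} gives $d_\H(r_{n+1},r_n)\to d>0$. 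You instead project onto the ray through $e^{i\theta}$, compute the projection explicitly from \eqref{eq:cosh in H} (equivalently Lemma~\ref{lm: properties of metric in H}~(1)), use the expansion $d_\H(w,re^{i\theta})=\tfrac12\log r+C(w,\theta)+o(1)$, and feed in the Julia--Wolff--Carath\'eodory ratio $f^{n+1}(z)/f^n(z)\to\lambda>1$ from \eqref{eq: angular derivative} along the non-tangentially convergent orbit; this bypasses Lemmas~\ref{lm:sequences lemma in H} and~\ref{lm: differences lemma} and the hyperbolic-step statement entirely, and has the bonus of identifying the limiting increment explicitly as $\tfrac12\log\lambda$. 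The trade-off is that your finish relies on an asymptotic expansion of the metric (which you should state with a rigorous $o(1)$, as both $r_n,r_{n+1}\to+\infty$ justify), whereas the paper's finish stays within exact hyperbolic-metric identities; both are sound, and both correctly locate the real work in Theorem~\ref{lm:orthogonal lemma 2}.
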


Similarly, the right half-plane version of Corollary \ref{cor: main cor} is:

\begin{cor}\label{cor: main cor H version}
     Let $f\colon\H\to\H$ be a hyperbolic map with Denjoy--Wolff point $\infty$. For any $z,w\in\H$, the sequence $\{d_\H(w,f^n(z))\}$ is eventually strictly increasing.
\end{cor}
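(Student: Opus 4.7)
The plan is to deduce Corollary \ref{cor: main cor H version} directly from Theorem \ref{thm: main monotonicity result} by constructing, for the given starting point $z$, a single curve $\gamma$ landing at $\infty$ with a non-tangential slope that passes through every iterate $f^n(z)$. Once such a $\gamma$ is in hand, since each $f^n(z)$ lies on $\gamma$ we may choose the projection $\pi_\gamma(f^n(z))=f^n(z)$, so that $d_\H(w,f^n(z))=d_\H(w,\pi_\gamma(f^n(z)))$, and Theorem \ref{thm: main monotonicity result} immediately yields eventual strict monotonicity.

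For the construction I would take $\gamma$ to be the Euclidean polygonal path through the orbit: for $t\in[n,n+1]$, set
\[
\gamma(t)=(n+1-t)f^n(z)+(t-n)f^{n+1}(z).
\]
Convexity of $\H$ keeps $\gamma$ inside the right half-plane, and $\gamma(n)=f^n(z)$ by construction. To verify the landing slope, I use the elementary fact that for $a,b\in\H$ the argument of any convex combination $(1-s)a+sb$ lies between $\arg a$ and $\arg b$, because $\tan$ of this argument is a convex combination of $\tan\arg a$ and $\tan\arg b$ and $\tan$ is increasing on $(-\tfrac{\pi}{2},\tfrac{\pi}{2})$. Thus, assuming $\arg f^n(z)\to\theta\in(-\tfrac{\pi}{2},\tfrac{\pi}{2})$ and $|f^n(z)|\to+\infty$, we obtain $\arg\gamma(t)\to\theta$ and $|\gamma(t)|\to+\infty$, so $\gamma$ lands at $\infty$ with slope $\theta$.

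The decisive input, and the main obstacle, is showing that a hyperbolic orbit has a definite slope, namely that $\lim_{n\to\infty}\arg f^n(z)$ exists in $(-\tfrac{\pi}{2},\tfrac{\pi}{2})$. This is a classical consequence of Valiron's linearization for hyperbolic self-maps, which produces a holomorphic $\psi\colon\H\to\H$ satisfying $\psi\circ f=\lambda\psi$ (where $\lambda=f'(\infty)>1$) together with the convergence $f^n(z)/\lambda^n\to\psi(z)$; taking arguments yields $\arg f^n(z)\to\arg\psi(z)\in(-\tfrac{\pi}{2},\tfrac{\pi}{2})$. Details on Valiron's theorem for hyperbolic self-maps can be found in \cite{BCDM-Book}. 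With this slope convergence in place, the polygonal curve of the second paragraph is a valid choice of $\gamma$ for Theorem \ref{thm: main monotonicity result}, and Corollary \ref{cor: main cor H version} follows.
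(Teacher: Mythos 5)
Your proposal is correct and follows essentially the same route as the paper: build the Euclidean polygonal curve through the orbit $\{f^n(z)\}$, note it lands at $\infty$ with the (non-tangential) slope of the orbit, choose $\pi_\gamma(f^n(z))=f^n(z)$, and apply Theorem \ref{thm: main monotonicity result}. The only cosmetic difference is that you justify the existence of the limit slope $\lim_n\arg f^n(z)\in(-\tfrac{\pi}{2},\tfrac{\pi}{2})$ via Valiron's linearization, whereas the paper simply invokes its stated preliminary, Theorem \ref{thm: step and slope for hyperbolic maps}.
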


We end this subsection with an important property regarding the hyperbolic step and the slope of convergence of hyperbolic maps. For the proofs, we refer to \cite[Theorem 4.3.4, Lemma 4.6.2, Corollary 4.6.9]{Abate}.

\begin{theorem}\label{thm: step and slope for hyperbolic maps}
    If $f\colon\H\to\H$ is a hyperbolic map, then for every $z\in\H$ there exist constants $\phi\in(-\tfrac{\pi}{2},\tfrac{\pi}{2})$ and $d\in(0,+\infty)$ so that
    \begin{enumerate}[label=\textnormal{(\arabic*)}]
        \item $\displaystyle \lim_{n\to+\infty}d_\H(f^{n+1}(z),f^n(z))=d$; and
        \item $\displaystyle \lim_{n\to+\infty} \mathrm{arg}(f^n(z))=\phi$.
    \end{enumerate}
\end{theorem}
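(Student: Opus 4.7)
The plan is to derive both limits by combining the Schwarz--Pick contraction of $d_\H$ with the Julia--Wolff--Carath\'eodory theorem, which encodes the hyperbolic hypothesis through the angular derivative $\lambda := f'(\infty) > 1$. I would proceed in the order: existence of the limit in (1), non-tangential approach of the orbit, existence of the slope $\phi$, and finally positivity of $d$ (which requires $\phi$). The first step is immediate from Schwarz--Pick:
\[
d_\H(f^{n+1}(z),f^n(z)) = d_\H(f(f^n(z)),f(f^{n-1}(z))) \leq d_\H(f^n(z),f^{n-1}(z)),
\]
so the sequence is non-increasing and bounded below, hence converges to some $d \geq 0$.

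For the slope, I would first establish that $\{f^n(z)\}$ approaches $\infty$ non-tangentially. This is precisely what the Julia--Wolff--Carath\'eodory machinery delivers: Julia's lemma, transferred to $\H$, gives the horoball invariance $\mathrm{Re}\,f(w) \geq \lambda\,\mathrm{Re}\,w$ for all $w\in\H$, so that $\mathrm{Re}\,f^n(z) \geq \lambda^n\,\mathrm{Re}\,z \to +\infty$ geometrically, while a complementary Schwarz--Pick estimate confines the orbit to a Stolz angle at $\infty$. Given non-tangentiality, the angular derivative formula \eqref{eq: angular derivative} yields
\[
\frac{f^{n+1}(z)}{f^n(z)} \longrightarrow \lambda \in (1,+\infty),
\]
and hence $\arg f^{n+1}(z) - \arg f^n(z) \to 0$. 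Upgrading this to genuine convergence of $\{\arg f^n(z)\}$ is the main difficulty, and my proposed device is the hyperbolic Koenigs linearisation: a holomorphic intertwiner $\sigma\colon \H \to \H$ satisfying $\sigma \circ f = \lambda\,\sigma$, under which $\sigma(f^n(z)) = \lambda^n \sigma(z)$ traces a fixed Euclidean ray from the origin. The angular limit of $\sigma^{-1}$ at $\infty$, supplied by Julia--Carath\'eodory applied to the conformal conjugacy, transfers this rigid direction back to $\{f^n(z)\}$ and produces the slope $\phi \in (-\tfrac{\pi}{2}, \tfrac{\pi}{2})$.

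Finally, with the slope and ratio asymptotic in hand, positivity of $d$ follows from \eqref{eq:cosh in H}. Substituting $z_n = f^n(z) \sim |z_n|\,e^{i\phi}$ and $z_{n+1} \sim \lambda\, z_n$ into \eqref{eq:cosh in H}, a direct simplification yields
\[
\cosh d = \frac{\sqrt{(1+\lambda)^2 - 4\lambda\,\sin^2 \phi}}{2\sqrt{\lambda}\,\cos \phi},
\]
which is strictly greater than $1$ because $(1-\lambda)^2 > 0$; hence $d > 0$. The main obstacle I foresee is the passage from $\arg f^{n+1}(z) - \arg f^n(z) \to 0$ to convergence of $\{\arg f^n(z)\}$ itself: this leap is not automatic from the Schwarz--Pick and Julia inputs alone, and sidestepping the Koenigs linearisation would require a more delicate quantitative control on the argument increments (for instance, a summability estimate extracted from the Julia--Carath\'eodory asymptotics).
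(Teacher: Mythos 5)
The paper does not actually prove this statement --- it quotes it from \cite[Theorem 4.3.4, Lemma 4.6.2, Corollary 4.6.9]{Abate} --- so your proposal must stand on its own. Much of your skeleton is fine: Schwarz--Pick gives the existence of the limit $d\ge 0$ in (1); Julia's inequality $\mathrm{Re}\,f(w)\ge\lambda\,\mathrm{Re}\,w$ forces geometric growth of $\mathrm{Re}\,f^n(z)$, and together with the bounded hyperbolic step it does confine the orbit to a Stolz angle (this needs the summability estimate $\lvert \mathrm{Im}\,f^m(z)-\mathrm{Im}\,z\rvert\le k\sum_{n<m}\mathrm{Re}\,f^n(z)\le \tfrac{k}{\lambda-1}\,\mathrm{Re}\,f^m(z)$, which you only gesture at, but it is standard); from non-tangential approach and \eqref{eq: angular derivative} you correctly get $f^{n+1}(z)/f^n(z)\to\lambda$; and, once (2) is available, your computation with \eqref{eq:cosh in H} is correct and yields $\cosh d=\sqrt{(1+\lambda)^2-4\lambda\sin^2\phi}\,/\,(2\sqrt{\lambda}\cos\phi)>1$, hence $d>0$.

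The genuine gap sits exactly at the step you yourself flag as the main difficulty, and the device you propose does not close it as stated. The Valiron/Koenigs intertwiner $\sigma$ with $\sigma\circ f=\lambda\,\sigma$ is in general not univalent, so ``$\sigma^{-1}$'' need not exist; and even when $f$ (hence $\sigma$) is univalent, Julia--Wolff--Carath\'eodory cannot simply be ``applied to $\sigma^{-1}$'': that map is defined only on $\sigma(\H)$, not on $\H$, and transferring the direction of the ray $\{\lambda^n\sigma(z)\}$ back to $\{f^n(z)\}$ would require angular-limit information for $\sigma^{-1}$ along that specific ray, together with finiteness and positivity of the relevant derivative --- none of which you establish. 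The standard repair is to use Valiron's theorem in its full strength rather than only the functional equation: for hyperbolic $f$ with $f'(\infty)=\lambda>1$ one has $f^n(z)/\lambda^n\to\sigma(z)$ locally uniformly, and Julia's inequality gives $\mathrm{Re}\bigl(f^n(z)/\lambda^n\bigr)\ge\mathrm{Re}\,z>0$, so $\sigma(z)\in\H$; then $\arg f^n(z)=\arg\bigl(f^n(z)/\lambda^n\bigr)\to\arg\sigma(z)=:\phi\in\left(-\tfrac{\pi}{2},\tfrac{\pi}{2}\right)$ immediately, no inversion of $\sigma$ needed, and your final computation then delivers $d>0$. This convergence statement is precisely the content of the reference the paper leans on, \cite[Corollary 4.6.9]{Abate}.
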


\subsection{Key lemmas}
In this final preliminary subsection we prove several results involving the hyperbolic geometry of $\H$ that are crucial to our analysis. The first is a restatement of \cite[Lemma 4.3]{BCDG}. A proof can be obtained easily from direct calculations on the formula \eqref{eq:hyperbolic distance in H}. 

\begin{lm}[\cite{BCDG}]\label{lm: properties of metric in H}
    Let $\theta \in(-\tfrac{\pi}{2},\tfrac{\pi}{2})$. 
    \begin{enumerate}[label=\textnormal{(\arabic*)}]
        \item For every $r_0>0$ and $\phi \in(-\tfrac{\pi}{2},\tfrac{\pi}{2})$ the function $r\mapsto d_\H(re^{i\phi},r_0e^{i\theta})$, $r\in(0,+\infty)$ has a minimum for $r=r_0$, is strictly decreasing for $r<r_0$ and strictly increasing for $r>r_0$.
        \item For any $r>0$ and $\phi\in(-\tfrac{\pi}{2},\tfrac{\pi}{2})$, we have $d_\H(re^{i\theta},re^{i\phi})=d_\H(e^{i\theta},e^{i\phi})$. Also, the function $(-\tfrac{\pi}{2},\tfrac{\pi}{2})\ni\phi\mapsto d_\H(e^{i\theta},e^{i\phi})$ is strictly decreasing in $(-\tfrac{\pi}{2},\theta)$ and strictly increasing in $(\theta, \tfrac{\pi}{2})$.
    \end{enumerate}
\end{lm}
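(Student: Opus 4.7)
The plan is to derive both statements by elementary calculation from the formula \eqref{eq:cosh in H}. For points $z = re^{i\phi}$ and $w = r_0 e^{i\theta}$ in $\H$, expanding $|z+\overline{w}|^2 = (r\cos\phi + r_0\cos\theta)^2 + (r\sin\phi - r_0\sin\theta)^2$ gives
$$
|z+\overline{w}|^2 = r^2 + r_0^2 + 2rr_0\cos(\phi+\theta),
$$
so that by \eqref{eq:cosh in H},
$$
\cosh^2 d_\H(re^{i\phi}, r_0 e^{i\theta}) = \frac{r^2 + r_0^2 + 2rr_0\cos(\phi+\theta)}{4rr_0 \cos\phi\cos\theta}.
$$
Since $\cosh$ is strictly increasing on $[0,+\infty)$, all the monotonicity statements can be read off from this explicit quantity.

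For part (1), I would fix $r_0, \phi, \theta$ and, dividing out the positive constant $4r_0\cos\phi\cos\theta$, regard the numerator divided by $r$ as the function
$$
h(r) = r + \frac{r_0^2}{r} + 2r_0\cos(\phi+\theta), \qquad r\in(0,+\infty).
$$
Differentiating gives $h'(r) = 1 - r_0^2/r^2$, which vanishes only at $r = r_0$, is negative on $(0,r_0)$, and positive on $(r_0,+\infty)$. This immediately yields the three claims of (1).

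For part (2), substituting $r = r_0$ in the formula above produces
$$
\cosh^2 d_\H(re^{i\phi}, re^{i\theta}) = \frac{1 + \cos(\phi+\theta)}{2\cos\phi\cos\theta},
$$
which is manifestly independent of $r$, establishing the first assertion. (Conceptually, this reflects the well-known fact that the dilations $z\mapsto \lambda z$, $\lambda>0$, are hyperbolic isometries of $\H$, but the direct computation is shorter.) For the dependence on $\phi$, I would differentiate the right-hand side in $\phi$; a short manipulation using the identity $\sin\phi\cos(\theta+\phi) - \cos\phi\sin(\theta+\phi) = \sin(-\theta) = -\sin\theta$ shows that the derivative equals $(\sin\phi - \sin\theta)/(2\cos\theta\cos^2\phi)$. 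Its sign is that of $\sin\phi - \sin\theta$, and since $\sin$ is strictly increasing on $(-\tfrac{\pi}{2},\tfrac{\pi}{2})$, the asserted strict decrease on $(-\tfrac{\pi}{2},\theta)$ and strict increase on $(\theta, \tfrac{\pi}{2})$ follow.

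There is no genuine obstacle here; the only difficulty is bookkeeping with trigonometric identities, which is presumably why the authors defer to the citation of \cite{BCDG} rather than reproduce the computation in full.
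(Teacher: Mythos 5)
Your proposal is correct, and it is exactly the ``direct calculation'' route the paper itself indicates: the authors omit the proof, citing \cite{BCDG} and remarking that it follows from elementary computation with the distance formula, which is equivalent to your use of \eqref{eq:cosh in H}. The expansion $\lvert re^{i\phi}+r_0e^{-i\theta}\rvert^2=r^2+r_0^2+2rr_0\cos(\phi+\theta)$, the analysis of $h(r)=r+r_0^2/r+2r_0\cos(\phi+\theta)$, and the derivative $(\sin\phi-\sin\theta)/(2\cos\theta\cos^2\phi)$ all check out, so no gap remains.
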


Using Lemma \ref{lm: properties of metric in H} we can prove the following minor generalization of \cite[Lemma 4.4]{BCDG}, whose proof is almost identical to \cite[Lemma 4.4]{BCDG} and is thus omitted. 

\begin{lm}[\cite{BCDG}]\label{lm: sectors in H}
    Let $\gamma\colon [0,+\infty)\to\H$ be the straight half-line with $\gamma([0,+\infty))=\{re^{i\theta}\colon\ r\geq r_0\}$, for some $\theta\in(-\tfrac{\pi}{2}, \tfrac{\pi}{2}) $ and $r_0>0$. For every $R>0$, there exist $\phi_1\in(-\tfrac{\pi}{2},\theta)$ and $\phi_2\in(\theta, \tfrac{\pi}{2})$ satisfying $d_\H(e^{i\theta},e^{i\phi_1})=d_\H(e^{i\theta},e^{i\phi_2})=R$, so that 
    \[
    \{z\in\H\colon\ d_\H(z,\gamma)<R\}=D_\H(r_0e^{i\theta},R)\cup\{re^{i\phi} \colon r> r_0 \ \text{and} \ \phi\in(\phi_1,\phi_2)\}.
    \]
\end{lm}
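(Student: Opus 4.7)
The plan is to reduce the computation of $d_\H(z,\gamma)$ to a single real parameter and then apply Lemma~\ref{lm: properties of metric in H} twice. Writing $z = re^{i\phi}$, I would start from
\[
d_\H(z,\gamma) \;=\; \inf_{s\geq r_0} d_\H\!\bigl(re^{i\phi},\, se^{i\theta}\bigr),
\]
and invoke part~(1) of Lemma~\ref{lm: properties of metric in H}, which identifies the minimiser of the unconstrained problem on $(0,+\infty)$ as $s=r$. Consequently, depending on the sign of $r-r_0$, the minimum over $[r_0,+\infty)$ is attained either at $s=r$ (when $r\geq r_0$) or at the endpoint $s=r_0$ (when $r<r_0$).

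In the first case, part~(2) of the same lemma then gives $d_\H(z,\gamma)=d_\H(re^{i\phi},re^{i\theta})=d_\H(e^{i\phi},e^{i\theta})$, a quantity depending only on $\phi$. In the second case, $d_\H(z,\gamma)=d_\H(z,r_0e^{i\theta})$ outright, so the inequality $d_\H(z,\gamma)<R$ literally reads $z\in D_\H(r_0e^{i\theta},R)$. This dichotomy is the engine of the entire argument.

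To produce the angles $\phi_1,\phi_2$, I would observe that by part~(2) of Lemma~\ref{lm: properties of metric in H} the map $\phi\mapsto d_\H(e^{i\phi},e^{i\theta})$ vanishes at $\theta$ and is strictly monotone on each side, while a direct look at \eqref{eq:cosh in H} shows it tends to $+\infty$ as $\phi\to\pm\tfrac{\pi}{2}$ (the denominator $\sqrt{\cos\phi}$ collapses while the numerator $|e^{i\theta}+e^{-i\phi}|$ stays bounded away from $0$ because $\theta\in(-\tfrac{\pi}{2},\tfrac{\pi}{2})$). Hence, for every $R>0$ there are unique $\phi_1\in(-\tfrac{\pi}{2},\theta)$ and $\phi_2\in(\theta,\tfrac{\pi}{2})$ realising the value $R$, and the sublevel set $\{\phi : d_\H(e^{i\phi},e^{i\theta})<R\}$ is precisely $(\phi_1,\phi_2)$.

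Putting these pieces together: the points of $\{d_\H(\cdot,\gamma)<R\}$ with $|z|>r_0$ are exactly the sector $\{re^{i\phi} : r>r_0,\ \phi\in(\phi_1,\phi_2)\}$, while those with $|z|\leq r_0$ are exactly $D_\H(r_0e^{i\theta},R)\cap\{|z|\leq r_0\}$, which is absorbed by the first term of the claimed union. I do not anticipate a genuine obstacle here; the only mildly delicate input is the divergence of $d_\H(e^{i\phi},e^{i\theta})$ at $\pm\tfrac{\pi}{2}$, and that falls out of \eqref{eq:cosh in H}. The rest is the case analysis sketched above, run on top of Lemma~\ref{lm: properties of metric in H}.
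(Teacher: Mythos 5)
Your argument is correct and is essentially the proof the paper has in mind: the paper omits it precisely because it is the same case analysis as in \cite{BCDG}*{Lemma 4.4}, namely using Lemma \ref{lm: properties of metric in H} to see that $d_\H(re^{i\phi},\gamma)$ equals $d_\H(e^{i\phi},e^{i\theta})$ when $r\geq r_0$ and $d_\H(re^{i\phi},r_0e^{i\theta})$ when $r<r_0$, together with the monotonicity and divergence of $\phi\mapsto d_\H(e^{i\phi},e^{i\theta})$ near $\pm\tfrac{\pi}{2}$ to define $\phi_1,\phi_2$. Your verification of the divergence via \eqref{eq:cosh in H} and the absorption of the boundary case $|z|=r_0$ into $D_\H(r_0e^{i\theta},R)$ are both handled correctly.
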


\medskip

\begin{lm}\label{lm:sequences lemma in H}
    Let $\{r_n\},\{R_n\}\subset (0,+\infty)$ and $\{\theta_n\},\{\phi_n\}\subset(-\tfrac{\pi}{2},\tfrac{\pi}{2})$ be sequences so that both $\{\theta_n\}$ and $\{\phi_n\}$ converge to some $\theta\in(-\tfrac{\pi}{2},\tfrac{\pi}{2})$. The limit 
    \[
    \lim_{n\to+\infty}d_\H(r_n,R_n)
    \]
    exists if and only if the limit
    \[
    \lim_{n\to+\infty}d_\H(r_ne^{i\theta_n}, R_ne^{i\phi_n})
    \]
    exists. Moreover, if any of the above limits is zero or $+\infty$, so is the other. 
\end{lm}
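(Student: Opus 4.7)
The plan is to reduce both hyperbolic distances to explicit continuous expressions in the single auxiliary quantity $\sigma_n \vcentcolon= \tfrac{r_n}{R_n}+\tfrac{R_n}{r_n} \in [2,+\infty)$, and then exploit the convergence of the angle sequences $\{\theta_n\},\{\phi_n\}$ to transfer existence of one limit to the other.

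First, I would invoke formula \eqref{eq:cosh in H}. A direct expansion of $|z+\overline w|^2 = r_n^2 + R_n^2 + 2r_n R_n \cos(\theta_n+\phi_n)$ for $z=r_ne^{i\theta_n}$ and $w=R_ne^{i\phi_n}$, together with the special case $\theta_n=\phi_n=0$, yields
\[
\cosh d_\H(r_n, R_n) = \frac{\sqrt{\sigma_n+2}}{2}, \qquad \cosh d_\H(r_n e^{i\theta_n}, R_n e^{i\phi_n}) = \frac{\sqrt{\sigma_n+2\cos(\theta_n+\phi_n)}}{2\sqrt{\cos\theta_n \cos\phi_n}}.
\]
Since $\theta_n,\phi_n \to \theta \in (-\tfrac{\pi}{2},\tfrac{\pi}{2})$, the angle factors $\cos\theta_n\cos\phi_n$ and $\cos(\theta_n+\phi_n)$ converge to the positive constant $\cos^2\theta$ and to $\cos(2\theta)$, respectively. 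In particular, each of the two expressions above is, for every $n$, a continuous strictly increasing function of $\sigma_n$ on $[2,+\infty)$ whose defining coefficients converge.

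Next, I would invert the two displays, obtaining
\[
\sigma_n = 4\cosh^2 d_\H(r_n, R_n) - 2 \qquad\text{and}\qquad \sigma_n = 4\cos\theta_n \cos\phi_n \, \cosh^2 d_\H(r_n e^{i\theta_n}, R_n e^{i\phi_n}) - 2\cos(\theta_n+\phi_n).
\]
Because the right-hand sides are continuous in the respective hyperbolic distances and in the convergent angle parameters, each identity sets up a correspondence between the existence of $\lim_n \sigma_n$ in $[2,+\infty]$ and the existence of the corresponding hyperbolic-distance limit in $[0,+\infty]$. Composing these two equivalences delivers the main statement.

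For the \emph{moreover} clause, I would set $L \vcentcolon= \lim_n d_\H(r_n, R_n)$ in the two identities and simplify using $\cos(2\theta) = 2\cos^2\theta - 1$, arriving at the clean asymptotic
\[
\lim_n \cosh d_\H(r_n e^{i\theta_n}, R_n e^{i\phi_n}) = \sqrt{\frac{\sinh^2 L}{\cos^2\theta} + 1}.
\]
Then $L = 0$ forces the right-hand side to $1$, i.e.\ the second distance tends to $0$; and $L = +\infty$ forces divergence. The reverse implications are symmetric. I do not foresee any deeper obstacle: the hypothesis $\theta \in (-\tfrac{\pi}{2},\tfrac{\pi}{2})$ ensures $\cos\theta > 0$ throughout, so no denominator degenerates, and the whole argument reduces to careful algebraic bookkeeping in formula \eqref{eq:cosh in H}.
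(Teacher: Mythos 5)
Your proposal is correct, and it follows a genuinely different route from the paper. The paper works with formula \eqref{eq:hyperbolic distance in H} and the pseudo-hyperbolic quantity, first disposing of the cases where either limit is $0$ or $+\infty$ (both are shown equivalent to $R_n/r_n\to 1$, respectively $R_n/r_n\to 0$ or $+\infty$), and then, for finite nonzero limits, runs an accumulation-point argument on $\{R_n/r_n\}$: it introduces $g(r)=\frac{r^2-2r+1}{r^2+2r\cos(2\theta)+1}$, uses the symmetry $g(r)=g(1/r)$ and its monotonicity on $(0,1)$ and $(1,+\infty)$ to conclude that any two subsequential limits $x,y$ satisfy $x=y$ or $xy=1$, which is enough because $\bigl\lvert\frac{x-1}{x+1}\bigr\rvert=\bigl\lvert\frac{y-1}{y+1}\bigr\rvert$ in either case. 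Your symmetrization $\sigma_n=\frac{r_n}{R_n}+\frac{R_n}{r_n}$ together with \eqref{eq:cosh in H} short-circuits exactly this casework: since $\sigma$ is invariant under $r/R\mapsto R/r$, the two distances become explicit invertible functions of $\sigma_n$ (with coefficients depending only on the convergent angles), so existence of either distance limit in $[0,+\infty]$ is equivalent to existence of $\lim_n\sigma_n$ in $[2,+\infty]$, with no subsequence analysis and with the $0$ and $+\infty$ cases absorbed into the same computation. As a bonus your method produces the quantitative relation $\lim_n\cosh d_\H(r_ne^{i\theta_n},R_ne^{i\phi_n})=\sqrt{\sinh^2 L/\cos^2\theta+1}$, which immediately gives the ``moreover'' clause and is strictly more information than the paper's lemma states (it is in the same spirit as Lemma \ref{lm: differences lemma}). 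The computations you quote check out ($\lvert z+\overline w\rvert^2=r_n^2+R_n^2+2r_nR_n\cos(\theta_n+\phi_n)$, and $\sigma_n+2\cos(\theta_n+\phi_n)=4\cos\theta_n\cos\phi_n\cosh^2d_\H\ge 4\cos^2\theta_n\cos^2\phi_n>0$, so no degeneracy); the only thing a written version should spell out is the two-way passage through $\mathrm{arccosh}$ and the fact that the angle coefficients converge to the strictly positive constants $\cos^2\theta$ and $\cos(2\theta)$, which is exactly where the hypothesis $\theta\in\left(-\tfrac{\pi}{2},\tfrac{\pi}{2}\right)$ enters, just as in the paper.
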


\begin{proof} From the formula of the hyperbolic metric in $\H$ \eqref{eq:hyperbolic distance in H}, we get that 
    \begin{equation}\label{eq:half-plane lemma, eq1}
	 d_\H(r_n,R_n)=\frac{1}{2}\log\frac{1+\left\lvert\frac{R_n-r_n}{R_n+r_n}\right\rvert}{1-\left\lvert\frac{R_n-r_n}{R_n+r_n}\right\rvert}\\
	=\frac{1}{2}\log\frac{1+\left\lvert\frac{\frac{R_n}{r_n}-1}{\frac{R_n}{r_n}+1}\right\rvert}{1-\left\lvert\frac{\frac{R_n}{r_n}-1}{\frac{R_n}{r_n}+1}\right\rvert}.   
	\end{equation}
    and 
    \begin{equation}\label{eq:half-plane lemma, eq2}
        d_\H(r_ne^{i\theta_n}, R_ne^{i\phi_n})=\frac{1}{2}\log\frac{1+\left\lvert\frac{R_ne^{i\phi_n}-r_ne^{i\theta_n}}{R_ne^{i\phi_n}+r_ne^{-i\theta_n}}\right\rvert}{1-\left\lvert\frac{R_ne^{i\phi_n}-r_ne^{i\theta_n}}{R_ne^{i\phi_n}+r_ne^{-i\theta_n}}\right\rvert}.
    \end{equation}
    Simple calculations show that 
    \begin{equation}\label{eq:half-plane lemma, eq3}
    \left\lvert\frac{R_ne^{i\phi_n}-r_ne^{i\theta_n}}{R_ne^{i\phi_n}+r_ne^{-i\theta_n}}\right\rvert^2 =\frac{\left(\frac{R_n}{r_n}\right)^2-2\frac{R_n}{r_n}\cos(\theta_n-\phi_n)+1}{\left(\frac{R_n}{r_n}\right)^2+2\frac{R_n}{r_n}\cos(\theta_n+\phi_n)+1}.
    \end{equation}
    From these last three equations, \eqref{eq:half-plane lemma, eq1}, \eqref{eq:half-plane lemma, eq2} and \eqref{eq:half-plane lemma, eq3}, and the fact that $\theta_n$ and $\phi_n$ converge to $\theta\in(-\tfrac{\pi}{2},\tfrac{\pi}{2})$, we see that
    \[
    \lim_{n\to+\infty}d_\H(r_n,R_n)=0\iff \lim_{n\to+\infty}\frac{R_n}{r_n}=1\iff \lim_{n\to+\infty}d_\H(r_ne^{i\theta_n}, R_ne^{i\phi_n})=0.
    \]
    Moreover, $\lim\limits_{n\to+\infty}d_\H(r_n,R_n)=+\infty$ if and only if 
    \begin{equation}\label{eq:half-plane lemma, eq+}
    \text{either} \ \lim_{n\to+\infty}\frac{R_n}{r_n}=0, \ \text{or}\ \lim_{n\to+\infty}\frac{R_n}{r_n}=+\infty.
    \end{equation}
    But, due to \eqref{eq:half-plane lemma, eq2} and \eqref{eq:half-plane lemma, eq3}, the condition \eqref{eq:half-plane lemma, eq+} is equivalent to 
    \[
    \lim\limits_{n\to+\infty} d_\H(r_ne^{i\theta_n}, R_ne^{i\phi_n})=+\infty.
    \]
    Hence, we obtain
    \[
        \lim_{n\to+\infty}d_\H(r_n,R_n)=+\infty\iff  \lim_{n\to+\infty}d_\H(r_ne^{i\theta_n}, R_ne^{i\phi_n})=+\infty.
    \]
    So, for the rest of the proof, we can assume that neither of the sequences $\{d_\H(R_n,r_n)\}$ and $\{d_\H(r_ne^{i\theta_n}, R_ne^{i\phi_n})\}$ accumulate to $0$ or $+\infty$. This means that $\{\frac{R_n}{r_n}\}$ does not accumulate to 0, 1 or $+\infty$. We first assume that 
    \begin{equation}\label{eq:half-plane lemma, eq++}
        \lim_{n\to+\infty}d_\H(R_ne^{i\phi_n},r_ne^{i\theta_n})\in(0,+\infty).
    \end{equation}
    Suppose that $x,y$ are two distinct accumulation points of the sequence $\{\frac{R_n}{r_n}\}$. According to our assumptions, $x,y\in(0,1)\cup(1,+\infty)$. Now, \eqref{eq:half-plane lemma, eq2} and the convergence in \eqref{eq:half-plane lemma, eq++} implies that the sequence
    \[
    \left\{\left\lvert\frac{R_ne^{i\phi_n}-r_ne^{i\theta_n}}{R_ne^{i\phi_n}+r_ne^{-i\theta_n}}\right\rvert\right\}
    \]
    converges. So, \eqref{eq:half-plane lemma, eq3} implies that
    \begin{equation}\label{eq:half=plane lemma, eq4}
    \frac{x^2-2x+1}{x^2+2x\cos(2\theta)+1}=\frac{y^2-2y+1}{y^2+2y\cos(2\theta)+1}.
    \end{equation}
    Consider the real function $g(r)=\frac{r^2-2r+1}{r^2+2r\cos(2\theta)+1}$, for $r>0$. Observe that $g(r)=g(\frac{1}{r})$, for all $r>0$. Also, $g$ is strictly decreasing in $(0,1)$ and strictly increasing in $(1,+\infty)$. Therefore, we can easily see that that $g(r_1)=g(r_2)$ for some $r_1,r_2>0$ if and only if either $r_1=r_2$ or $r_1r_2=1$. Since \eqref{eq:half=plane lemma, eq4} is exactly $g(x)=g(y)$, we get that either $x=y$ or $xy=1$. In any case
    \[
    \left|\frac{x-1}{x+1}\right|=\left|\frac{y-1}{y+1}\right|,
    \]
    which by \eqref{eq:half-plane lemma, eq1} yields the convergence of $\{d_\H(r_n,R_n)\}$. Conversely, if 
    \[
    \lim_{n\to+\infty} d_\H(r_n,R_n)\in (0,+\infty), 
    \]
    then by \eqref{eq:half-plane lemma, eq1} the sequence 
    \[
    \left\{ \left\lvert\frac{\frac{R_n}{r_n}-1}{\frac{R_n}{r_n}+1}\right\rvert\right\}
    \]
    converges. Just like before, we assume that $x,y\in(0,1)\cup(1,+\infty)$ are two accumulation points of the sequence $\{\frac{R_n}{r_n}\}$. Then, we have that
    \[
    \left|\frac{x-1}{x+1}\right|=\left|\frac{y-1}{y+1}\right|
    \]
    which implies that either $x=y$ or $xy=1$, and then we can apply the arguments above in the reverse order to obtain the convergence of $\{d_\H(R_ne^{i\phi_n},r_ne^{i\theta_n})\}$.
\end{proof}

\medskip

\begin{lm}\label{lm: differences lemma}
     Let $\{r_n\}\subset \R^+$ be a sequence converging to $+\infty$ and $\theta\in(-\tfrac{\pi}{2},\tfrac{\pi}{2})$. For any $r_0>0$, we have that
     \[
     \lim_{n\to+\infty}\left( d_\H(r_0e^{i\theta},r_ne^{i\theta}) - d_\H(r_0,r_n)\right) = -\log \left(\cos\theta\right) \in[0,+\infty).
     \]
\end{lm}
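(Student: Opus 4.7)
The plan is to compute both hyperbolic distances explicitly using the $\cosh$-formula \eqref{eq:cosh in H} and then take the difference. Writing
\[
C_n := \cosh d_\H(r_0, r_n) = \frac{r_0+r_n}{2\sqrt{r_0 r_n}}
\quad\text{and}\quad
C_n' := \cosh d_\H(r_0 e^{i\theta}, r_n e^{i\theta}),
\]
the identity $|r_0 e^{i\theta}+r_n e^{-i\theta}|^2 = r_0^2+r_n^2+2r_0r_n\cos(2\theta)$ gives
\[
C_n' = \frac{\sqrt{r_0^2+r_n^2+2r_0r_n\cos(2\theta)}}{2\cos\theta\,\sqrt{r_0 r_n}}.
\]
Since $r_n \to +\infty$ and $r_0$ is fixed, both $C_n$ and $C_n'$ tend to $+\infty$.

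Next, I would invoke the elementary asymptotic $\cosh^{-1}(x) = \log(2x) + o(1)$ as $x \to +\infty$ (coming from $\cosh^{-1}(x)=\log(x+\sqrt{x^2-1})$). Applied to both distances, this yields
\[
d_\H(r_0 e^{i\theta}, r_n e^{i\theta}) - d_\H(r_0, r_n) = \log\!\left(\frac{C_n'}{C_n}\right) + o(1).
\]
The ratio simplifies to
\[
\frac{C_n'}{C_n} = \frac{1}{\cos\theta}\cdot\frac{\sqrt{r_0^2+r_n^2+2r_0r_n\cos(2\theta)}}{r_0+r_n}.
\]
Dividing numerator and denominator inside the square root by $r_n^2$ and using $r_0/r_n \to 0$, the second factor converges to $1$, so $C_n'/C_n \to 1/\cos\theta$.

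Taking logarithms yields the limit $-\log(\cos\theta)$. Finally, since $\theta \in (-\tfrac{\pi}{2},\tfrac{\pi}{2})$ we have $\cos\theta \in (0,1]$, hence $-\log(\cos\theta) \in [0,+\infty)$, as claimed. No step here looks like a serious obstacle; the argument is a direct asymptotic analysis of the explicit distance formula, and the only mild care needed is in verifying that the $o(1)$ term from the $\cosh^{-1}$ expansion is uniform enough to be harmlessly absorbed, which follows from both $C_n$ and $C_n'$ tending to $+\infty$.
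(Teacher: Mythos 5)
Your argument is correct and is essentially the paper's computation in a different guise: since $\cosh d_\H(z,w)=(1-\rho_\H(z,w)^2)^{-1/2}$, your ratio $C_n'/C_n$ is exactly the quantity $\sqrt{\tfrac{1-\rho_\H(r_0,r_n)^2}{1-\rho_\H(r_0e^{i\theta},r_ne^{i\theta})^2}}$ that the paper shows tends to $1/\cos\theta$, and your $\cosh^{-1}(x)=\log(2x)+o(1)$ step plays the role of the paper's observation that the term $\log\tfrac{1+\rho_\H(r_0e^{i\theta},r_ne^{i\theta})}{1+\rho_\H(r_0,r_n)}$ vanishes in the limit. No gaps; the asymptotics you invoke are justified because both $C_n$ and $C_n'$ tend to $+\infty$.
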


\begin{proof}
    Using the formulas \eqref{eq:hyperbolic distance in H} and \eqref{eq: pseudo-hyperbolic metric}, for $d_\H$ and $\rho_\H$, we obtain that for all $n\in\N$
    \[
    d_\H(r_0e^{i\theta},r_ne^{i\theta}) - d_\H(r_0,r_n) = \log\frac{1+\rho_\H(r_0e^{i\theta},r_ne^{i\theta})}{1+\rho_\H(r_0,r_n)} + \frac{1}{2}\log\frac{1-\rho_\H(r_0,r_n)^2}{1-\rho_\H(r_0e^{i\theta},r_ne^{i\theta})^2}.
    \]
    Since $r_n$ converges to $+\infty$ and $r_0$ is fixed, we have that the pseudo-hyperbolic distances $\rho_\H(r_0,r_n)$ and $\rho_\H(r_0e^{i\theta},r_ne^{i\theta})$ both converge to 1. Therefore,
    \[
    \lim_{n\to+\infty}\log\frac{1+\rho_\H(r_0e^{i\theta},r_ne^{i\theta})}{1+\rho_\H(r_0,r_n)} =0.
    \]
    So it suffices to show that 
    \[
    \lim_{n\to+\infty}\frac{1-\rho_\H(r_0,r_n)^2}{1-\rho_\H(r_0e^{i\theta},r_ne^{i\theta})^2} =\frac{1}{\cos^2\theta}.
    \]
    This easily follows from the next string of equations, obtained by using the pseudo-hyperbolic formula \eqref{eq: useful pseudo-hyp formula}
    \begin{align*}
        \frac{1-\rho_\H(r_0,r_n)^2}{1-\rho_\H(r_0e^{i\theta},r_ne^{i\theta})^2}&=\frac{\lvert r_0e^{i\theta}+r_ne^{-i\theta}\rvert^2}{\cos^2\theta\ (r_0+r_n)^2} = \frac{r_0^2+2r_0r_n\cos2\theta+r_n^2}{\cos^2\theta\ (r_0+r_n)^2}\\
        &=\frac{\left(\frac{r_0}{r_n}\right)^2+2\frac{r_0}{r_n}\cos(2\theta)+1}{\left(\frac{r_0}{r_n}+1\right)^2\cos^2\theta}.\qedhere
    \end{align*}
\end{proof}

\section{Examples}\label{sect: examples}

Here we present the examples which show that Theorem \ref{thm: main result} is best possible, in the sense described in the Introduction. All our examples will be constructed in the right half-plane, and so we refer to Theorem \ref{thm: main monotonicity result} instead, which is the equivalent formulation of Theorem \ref{thm: main result} in $\H$. 

\begin{ex}\label{ex: hyperbolic semigroup}
    We first show that the strong monotonicity of Theorem \ref{thm: main monotonicity result} fails for continuous semigroups of holomorphic self-maps of $\H$. Let us first properly define such a semigroup and discuss some of its basic properties. For a complete reference on continuous semigroups of holomorphic functions, see \cite[Chapter 8]{BCDM-Book}.
    
    \medskip
    
    A \emph{semigroup $(\phi_t)$ in $\H$} is a family of holomorphic functions $\phi_t\colon \H\to \H$, for $t\geq0$, satisfying the following properties:
    \begin{enumerate}[label=\textnormal{(\roman*)}]
        \item $\phi_0=\mathrm{id}_\H$;
        \item $\phi_{t+s}=\phi_t\circ\phi_s$, for all $t,s\ge0$; and
        \item $\lim_{t\to0}\phi_t(z)=z$, for all $z\in\H$.
    \end{enumerate}
    Notice that conditions (ii) and (iii) imply that $\phi_t$ depends continuously on the real parameter $t$, in the topology of uniform convergence on compact sets.
    
    The continuous version of the Denjoy--Wolff theorem \cite[Theorem 8.3.1]{BCDM-Book} states that if the function $\phi_{t_0}$, for some $t_0>0$, does not have any fixed points in $\H$, then there exists a point $\tau\in\partial\H$ such that $\phi_t$ converges to the constant $\tau$, as $t\to+\infty$, uniformly on compact subsets of $\H$. Here, by $\partial\H$ we mean the imaginary axis along with the point infinity. The point $\tau$ is called the Denjoy--Wolff point of the semigroup $(\phi_t)$. Moreover, the angular derivative
    \begin{equation}\label{eq:angular der for semigroups}
    \phi'_t(\tau)=\angle\lim_{z\to\tau}\frac{\phi_t(z)}{z},
    \end{equation}
    exists for all $t$ and satisfies $\phi'_t(\tau)\in[1,+\infty)$. Just like for self-maps of $\H$, the semigroup $(\phi_t)$ is called \emph{hyperbolic} if $\phi'_t(\tau)>1$, for some (and hence any) $t>0$.
    
    Returning to our example, consider the functions $\phi_t(z)=e^t z$, $z\in\H$, for $t\ge0$. The family $(\phi_t)$ is clearly a hyperbolic semigroup in $\H$, with Denjoy--Wolff point $\infty$.

    \medskip
    
    Let $\delta:[0,+\infty)\to\H$ be a curve with trace $\delta([0,+\infty))=H\cup\left(\bigcup_{n\in\N}C_n\right)$, where $H$ is the horizontal half-line $\{t+i:t\ge0\}$ and $C_n$ is the hyperbolic geodesic segment of $\H$ joining $e^n$ with $\sqrt{e^{2n}-1}+i\in H$; see Figure \ref{fig: hyperbolic semigroup}. 
    
    We can parametrise $\delta$ so that $\lim_{t\to+\infty}\delta(t)=\infty$. Then, since the trace of the curve is bounded between two horizontal half-lines, we can easily see that $\lim_{t\to+\infty}\arg(\delta(t))=0$. As a result, all assumptions of Theorem \ref{thm: main monotonicity result} hold, but with a hyperbolic semigroup instead of a hyperbolic function.
    
    However, because the points $e^n$ lie in $\delta([0,+\infty))$ by construction, we have that $\pi_\delta(e^n)=e^n$. Moreover, the real axis intersects each $C_n$ orthogonally, meaning that the projection of any point $x>0$ onto the geodesic $C_n$ is $e^n$, for all $n\in\N$. So, for each $n\in\N$ there exists some $\epsilon_n>0$ small enough, so that $\pi_\delta(e^t)=e^n$, for all $t\in(n-\epsilon_n,n+\epsilon_n)$. In other words, the distance $d_\H(1,\pi_\delta(\phi_t(1)))=d_\H(1,\pi_\delta(e^t))$ remains constant on each interval $(n-\epsilon_n,n+\epsilon_n)$, $n\in\N$, which shows that $(\phi_t)$ does not satisfy the monotonicity result described in our main result.
\end{ex}

\begin{figure}[ht]
    \centering
    \begin{tikzpicture}[scale=2.1]
    \draw[->] (.9,-0.3) -- (.9,1.5);
            \draw[ dotted] (.9,0) -- (5.2,0);
        \draw plot[domain=0.:0.7297276562269663,variable=\t]({1.*1.5*cos(\t r)+0.*1.5*sin(\t r)},{0.*1.5*cos(\t r)+1.*1.5*sin(\t r)});
\draw  plot[domain=0.:0.46055399168132244,variable=\t]({1.*2.25*cos(\t r)+0.*2.25*sin(\t r)},{0.*2.25*cos(\t r)+1.*2.25*sin(\t r)});
\draw   plot[domain=0.:0.30081247525662086,variable=\t]({1.*3.375*cos(\t r)+0.*3.375*sin(\t r)},{0.*3.375*cos(\t r)+1.*3.375*sin(\t r)});
\draw  (1,1.)-- (5.2,1.);
\draw  plot[domain=0.:0.19883851477848238,variable=\t]({1.*5.0625*cos(\t r)+0.*5.0625*sin(\t r)},{0.*5.0625*cos(\t r)+1.*5.0625*sin(\t r)});
\filldraw[black] (1.5,0) circle (.5pt) node[below] {$e$};
\filldraw[black] (1.12,1) circle (.5pt) node[above right, xshift=-5pt, rotate=25] {$\sqrt{e^2-1}+i$};
\filldraw[black] (2.25,0) circle (.5pt) node[below] {$e^2$};
\filldraw[black] (2.02,1) circle (.5pt) node[above right, xshift=-5pt, rotate=25] {$\sqrt{e^4-1}+i$};
\filldraw[black] (3.38,0) circle (.5pt) node[below] {$e^3$};
\filldraw[black] (3.22,1) circle (.5pt) node[above right, xshift=-5pt, rotate=25] {$\sqrt{e^6-1}+i$};
\filldraw[black] (5.06,0) circle (.5pt) node[below] {$e^4$};
\filldraw[black] (4.96,1) circle (.5pt) node[above right, xshift=-5pt, rotate=25] {$\sqrt{e^8-1}+i$};
\filldraw[black] (1,1) circle (.5pt) node[yshift=-15pt, rotate=90] {$1+i$};
    \end{tikzpicture}
    \caption{Example for a hyperbolic semigroup.}\label{fig: hyperbolic semigroup}
\end{figure}
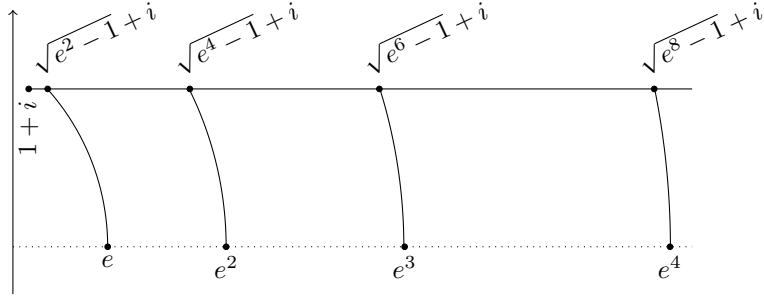

\begin{ex}\label{ex: parabolic map}
    In this second example, we are going to verify that Theorem \ref{thm: main monotonicity result} fails in general for parabolic self-maps of the right half-plane. We consider two examples, one for each main type of parabolic maps. It is known (see, for instance, \cite[Section 4.6]{Abate}) that for a parabolic map $f\colon \H\to \H$, we have that the limit $\lim_{n\to+\infty}d_\H(f^n(1),f^{n+1}(1))$ exists and is either zero or positive. The former type of parabolic map is called \emph{parabolic of zero hyperbolic step}, while the latter \emph{parabolic of positive hyperbolic step}.
    
    First, consider the holomorphic function $f\colon\H\to\H$, with $f(z)=z+1$. It is clear that $f'(\infty)=1$, and thus $f$ is parabolic. Also, $d_\H(f^{n+1}(1),f^n(1))=\tfrac{1}{2}\log\tfrac{n+2}{n+1}$, due to \eqref{eq:hyperbolic distance in H}, meaning that $f$ is parabolic of zero hyperbolic step.\\
    Let $\delta:[0,+\infty)\to\H$ be a curve with trace $\delta([0,+\infty))=H\cup\left(\bigcup_{n\in\N}C_n\right)$, where $H$ is the horizontal half-line $\{t+i:t\ge1\}$ and $C_n$ is the hyperbolic geodesic segment of $\H$ joining $3n$ with $\sqrt{(3n)^2-1}+i$; see Figure \ref{fig: parabolic map}. Similarly to the previous example, we may write $\lim_{t\to+\infty}\delta(t)=\infty$ and $\lim_{t\to+\infty}\arg(\delta(t))=0$. Thus, $\delta$ satisfies the assumptions of Theorem \ref{thm: main monotonicity result}.

    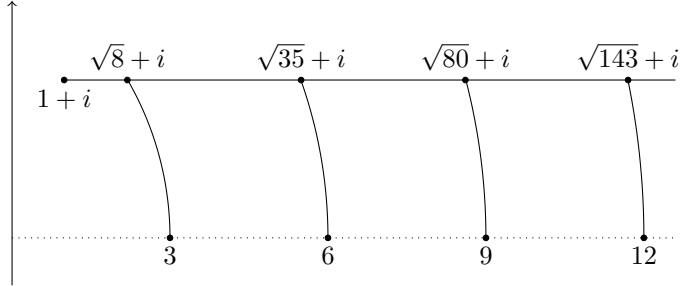
\begin{figure}[ht]
    \centering
    \begin{tikzpicture}[scale=2.1]
    \draw[->] (1,-0.3) -- (1,1.5);
            \draw[ dotted] (1,0) -- (5.2,0);
\draw plot[domain=0.:0.5235987755982989,variable=\t]({1.*2.*cos(\t r)+0.*2.*sin(\t r)},{0.*2.*cos(\t r)+1.*2.*sin(\t r)});
\draw plot[domain=0.:0.3398369094541219,variable=\t]({1.*3.*cos(\t r)+0.*3.*sin(\t r)},{0.*3.*cos(\t r)+1.*3.*sin(\t r)});
\draw plot[domain=0.:0.25268025514207865,variable=\t]({1.*4.*cos(\t r)+0.*4.*sin(\t r)},{0.*4.*cos(\t r)+1.*4.*sin(\t r)});
\draw plot[domain=0.:0.2013579207903308,variable=\t]({1.*5.*cos(\t r)+0.*5.*sin(\t r)},{0.*5.*cos(\t r)+1.*5.*sin(\t r)});
\draw  (1.33,1.)-- (5.2,1.);

\filldraw[black] (1.33,1) circle (.5pt) node[below] {$1+i$};
\filldraw[black] (2,0) circle (.5pt) node[below] {$3$};
\filldraw[black] (1.73,1) circle (.5pt)  node[above] {$\sqrt{8}+i$};
\filldraw[black] (3,0) circle (.5pt) node[below] {$6$};
\filldraw[black] (2.83,1) circle (.5pt)  node[above] {$\sqrt{35}+i$};
\filldraw[black] (4,0) circle (.5pt) node[below] {$9$};
\filldraw[black] (3.87,1) circle (.5pt)  node[above] {$\sqrt{80}+i$};
\filldraw[black] (5,0) circle (.5pt) node[below] {$12$};
\filldraw[black] (4.9,1) circle (.5pt)  node[above] {$\sqrt{143}+i$};
 \end{tikzpicture}
    \caption{Example for a parabolic map with zero hyperbolic step.}
    \label{fig: parabolic map}
\end{figure}

    Now, consider the points $f^{3n-2}(1)=3n-1$, for $n\in\N$, which form a subsequence of the orbit $\{f^n(1)\}$. By construction, the projection of $3n-1$ onto $\delta$ is either its projection onto the half-line $H$, or its projection onto one of the circular arcs $C_n$. Note that $H$ and all the $C_n$ are hyperbolic geodesic segments of $\H$, and thus the aforementioned projections are unique. Given that the real axis is orthogonal to every $C_k$, we can see that the projection of $3n-1$ onto each $C_k$ is the point $3k$. Therefore, the point of the set $\bigcup_{k\in\N}C_k$ that is closest, in hyperbolic terms, to $3n-1$ is either $3n$ or $3n-3$. Using \eqref{eq:hyperbolic distance in H}, we easily check that 
    \[
    d_\H(3n-3,3n-1)>d_\H(3n-1,3n).
    \]
    So, the point of the set $\bigcup_{k\in\N}C_k$ closest to $3n-1$ is indeed $3n$. We claim that $\pi_\delta(3n-1)=3n$. Note that for any $x>0$, the Euclidean circle centred at $i$ and of radius $\lvert x-i\rvert$ is a hyperbolic geodesic of $\H$ perpendicular to the half-line $H$. So, the projection of $x$ onto $H$ is the point of intersection of the aforementioned circle and $H$. Simple calculations show that this point is $\sqrt{x^2+1}+i$. Consequently, the projection of $3n-1$ onto the half-line $H$ is $\sqrt{(3n-1)^2+1}+i$. Therefore, our goal is to show that 
    \begin{equation}\label{eq:ex2}
        d_\H\left(\sqrt{(3n-1)^2+1}+i, 3n-1\right)>d_\H(3n, 3n-1). 
    \end{equation}
    Due to the connection between the hyperbolic metric $d_\H$ and the pseudo-hyperbolic metric $\rho_\H$ evident in the formula \eqref{eq:hyperbolic distance in H}, it suffices to show that
    \begin{equation}\label{eq:ex3}
        \rho_\H\left(\sqrt{(3n-1)^2+1}+i, 3n-1\right)>\rho_\H(3n, 3n-1).
    \end{equation}
    Now, simple computations using the formula \eqref{eq: pseudo-hyperbolic metric} of $\rho_\H$ yield
    \begin{eqnarray*}
        \rho_\H(3n-1,\sqrt{9n^2+1}+i)&=&\dfrac{1}{\sqrt{(3n-1)^2+1}+3n-1}\\
        &>&\frac{1}{2(3n-1)+1}=\rho_\H(3n-1,3n),
    \end{eqnarray*}
    which is exactly the desired inequality \eqref{eq:ex3}.
    
    So far we have shown that $\pi_\delta(f^{3n-2}(1))=3n$, for all $n\in\N$. Moreover, since, by construction, we have that $3n\in\delta([0,+\infty))$, for all $n\in\N$, we immediately get that $\pi_\delta(f^{3n-1}(1))=\pi_\delta(3n)=3n$.  To sum up, we have that
    \[
    \pi_\delta(f^{3n-2}(1))=\pi_\delta(f^{3n-1}(1)), \quad \text{for all } n\in\N,
    \]
    and thus the sequence $\{d_{\H}(1,\pi_\delta(f^k(1)))\}$ is not eventually strictly increasing.

    The example for parabolic maps of positive hyperbolic step is similar, so we provide a rough sketch of the construction. Let $g\colon \H\to \H$ be the map with $g(z)=z+i$. It is easy to see that $g$ is a parabolic automorphism of $\H$, and thus has positive hyperbolic step (see \cite[Lemma 4.6.3]{Abate}). Consider the curve $\gamma\colon[0,+\infty)\to\H$, with trace $\gamma([0,+\infty))=H'\cup\left(\bigcup_{n\in\N}C_n'\right)$, where $H'$ is the horizontal half-line $\{t-i\colon t\geq 3\}$ and $C'_n$ is the hyperbolic geodesic joining $\sqrt{1+(2n+1)^2}$ and $(2n+1)-i$, as in Figure \ref{fig: parabolic map 2}.
    \begin{figure}[ht]
    \centering
    \begin{tikzpicture}[scale=2.1]
    \draw[->] (1,-1.3) -- (1,0.3);
            \draw[ dotted] (1,0) -- (5.2,0);
\draw plot[domain=5.75958653158129:6.28318530717959,variable=\t]({1.*2.*cos(\t r)+0.*2.*sin(\t r)},{0.*2.*cos(\t r)+1.*2.*sin(\t r)});
\draw plot[domain=5.94334839772546:6.28318530717959,variable=\t]({1.*3.*cos(\t r)+0.*3.*sin(\t r)},{0.*3.*cos(\t r)+1.*3.*sin(\t r)});
\draw plot[domain=6.03050505203751:6.28318530717959,variable=\t]({1.*4.*cos(\t r)+0.*4.*sin(\t r)},{0.*4.*cos(\t r)+1.*4.*sin(\t r)});
\draw plot[domain=6.08182738638926:6.28318530717959,variable=\t]({1.*5.*cos(\t r)+0.*5.*sin(\t r)},{0.*5.*cos(\t r)+1.*5.*sin(\t r)});

\draw  (1.73,-1.)-- (5.2,-1.);

\filldraw[black] (1.73,-1) circle (.5pt) node[below] {$3-i$};
\filldraw[black] (2,0) circle (.5pt) node[above] {$\sqrt{10}$};
\filldraw[black] (3,0) circle (.5pt) node[above] {$\sqrt{26}$};
\filldraw[black] (2.83,-1) circle (.5pt)  node[below] {$5-i$};
\filldraw[black] (4,0) circle (.5pt) node[above] {$\sqrt{50}$};
\filldraw[black] (3.87,-1) circle (.5pt)  node[below] {$7-i$};
\filldraw[black] (5,0) circle (.5pt) node[above] {$\sqrt{82}$};
\filldraw[black] (4.9,-1) circle (.5pt)  node[below] {$9-i$};
 \end{tikzpicture}
    \caption{Example for a parabolic map with positive hyperbolic step.}
    \label{fig: parabolic map 2}
\end{figure}
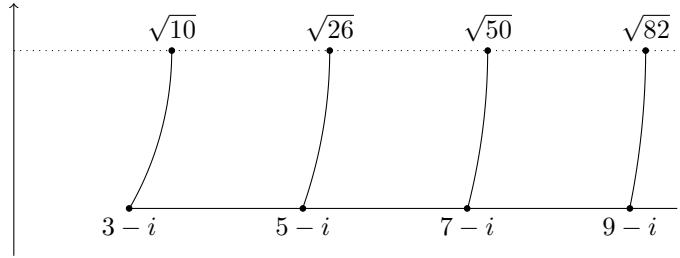
    Note that $\lvert g^n(1)\rvert=\lvert 1+ni\rvert=\sqrt{1+n^2}$, and so following arguments similar to those presented in the example for the zero hyperbolic step we have that the projection of $g^{2n+1}(1)$ onto $\gamma$ is the point $\sqrt{1+(2n+1)^2}$.\\
    We now need to evaluate the projection of $g^{2n+2}(1)$ onto $\gamma$. Firstly, the projection of $g^{2n+2}(1)$ onto the half-line $H'$ is $\sqrt{1+(2n+2)^2}-i$. Also, observe that the segment $C_n'$ lies in the geodesic $L_n=\left\{z\in\H\colon \lvert z \rvert =\sqrt{1+(2n+1)^2}\right\}$. Simple Euclidean arguments show that the hyperbolic projection of the point $g^{2n+2}(1)= 1+(2n+2)i$ onto $L_n$ lies in the upper half-plane. This means that the point of $C_n'$ closest (in hyperbolic terms) to $g^{2n+2}(1)$ is $\sqrt{1+(2n+1)^2}$. Similarly the point of $C_{n+1}'$ closest to $g^{2n+2}(1)$ is $\sqrt{1+(2n+3)^2}$. We therefore have that the projection of $g^{2n+2}(1)$ onto $\gamma$ is one of the points $\sqrt{1+(2n+2)^2}-i$, $\sqrt{1+(2n+1)^2}$ or $\sqrt{1+(2n+3)^2}$. By performing long, but elementary computations using the pseudo-hyperbolic formula \eqref{eq: useful pseudo-hyp formula}, one can verify that 
    \[
    \rho_\H\left(g^{2n+2}(1), \sqrt{1+(2n+1)^2}\right) < \rho_\H\left(g^{2n+2}(1), \sqrt{1+(2n+2)^2}-i\right),
    \]
    and
    \[
    \rho_\H\left(g^{2n+2}(1), \sqrt{1+(2n+1)^2}\right) < \rho_\H\left(g^{2n+2}(1), \sqrt{1+(2n+3)^2}\right).
    \]
    We conclude that $\pi_\gamma(g^{2n+2}(1))=\sqrt{1+(2n+1)^2} = \pi_\gamma(g^{2n+1}(1))$, for all $n\in\N$, implying that the sequence $\{d_\H(1,\pi_\gamma(g^n(1)))\}$ is not eventually strictly increasing.    
\end{ex}

\begin{ex}\label{ex: hyperbolic map}
    Here, we present an example demonstrating that the ``eventually" part of the monotonicity in Theorem \ref{thm: main monotonicity result} cannot be omitted. Let $f\colon\H\to\H$ be the hyperbolic function with $f(z)=2z$.

    Consider the curve $\delta\colon [0,+\infty)\to\H$, with trace 
    \[
    \delta([0,+\infty))=\partial D_\H\left(2, \log\sqrt[4]{2}\right)\cup \partial D_\H\left(4, \log\sqrt[4]{2}\right) \cup\left\{t\in\R\colon t\geq 4\sqrt{2}\right\},
    \]
    where $\partial D_\H(z,r)$ is the hyperbolic circle centred at $z\in\H$ and of radius $r>0$ (see Figure \ref{fig: hyperbolic map}). 

    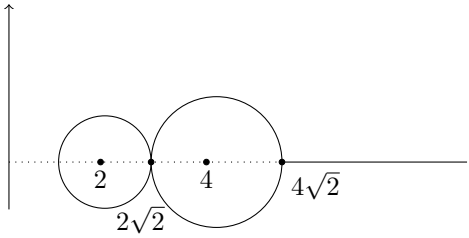
\begin{figure}[ht]
    \centering
    \begin{tikzpicture}[scale=2.1]
    \draw[->] (1.1,-0.3) -- (1.1,1);
            \draw[ dotted] (1.1,0) -- (2.828,0);
\draw (1.7071067811865475,0.) circle (0.2928932188134523cm);
\draw (2.414213562373095,0.) circle (0.4142135623730949cm);
\filldraw[black] (1.68,0) circle (.5pt) node[below] {$2$};
\filldraw[black] (2.35,0) circle (.5pt) node[below] {$4$};
\filldraw[black] (2.82842712474619,0) circle (.5pt) node[below right] {$4\sqrt{2}$};
\filldraw[black] (2,0) circle (.5pt) node[below, yshift=-13pt, xshift=-4pt] {$2\sqrt{2}$};
\draw (2.828,0) -- (4,0);
\end{tikzpicture}
    \caption{Example for a hyperbolic map.}\label{fig: hyperbolic map}
\end{figure}

The curve can be parametrised so that $\lim_{t\to+\infty}\delta(t)=\infty$, and so by construction there exists some $t_0>0$, so that $\arg(\delta(t))=0$, for all $t\geq t_0$. Therefore, $\delta$ satisfies the assumptions of Theorem \ref{thm: main monotonicity result}.

Using the formula \eqref{eq:hyperbolic distance in H} for $d_\H$, we can see that the circles $\partial D_\H\left(2, \log\sqrt[4]{2}\right)$ and $\partial D_\H\left(4, \log\sqrt[4]{2}\right)$ intersect at the point $2\sqrt{2}$. Moreover, we trivially have that all the points of $\partial D_\H\left(2, \log\sqrt[4]{2}\right)$ are equidistant from $2$ and so the projection of $f(1)=2$ onto $\delta$ can be chosen to be $2\sqrt{2}$. Similarly, the projection of $f^2(1)=4$ can be, again, chosen to be $2\sqrt{2}$, meaning that $\pi_\delta(f(1))=\pi_\delta(f^2(2))$. So, the sequence $d_\H(1,\pi_\delta(f^n(1))$ is strictly increasing only for $n\geq3$.
\end{ex}

\begin{ex}\label{ex: tangential slope example}
    Our next example shows that our main theorem fails if we assume that the curve $\gamma\colon[0,+\infty)\to\H$ satisfies
    \[
    \lim_{t\to+\infty}\mathrm{arg}(\gamma(t))=\pm\frac{\pi}{2}.
    \]
    For this, we let $\gamma$ be such that $\gamma(t)=1+it$, for all $t\geq0$. It is easy to see that $\arg(\gamma(t))$ converges to $\tfrac{\pi}{2}$. Now consider the hyperbolic map $f\colon\H\to\H$, with $f(z)=2z$, we used in Example \ref{ex: hyperbolic map}. Then, simple computations show that the projection of $f^n(1)=2^n$ onto $\gamma$ is the point $1$, for all $n\in\N$, and thus the sequence $\pi_\gamma(f^n(z))$ is constant.\\
    For the case where $\arg(\gamma(t))$ tends to $-\tfrac{\pi}{2}$ one can use the curve $\gamma(t)=1-it$.
\end{ex}

\begin{ex}\label{ex:corollary counterexample}
    Finally, we describe how the example constructed in \cite{BK} can be used to show that Corollary \ref{cor: main cor} may fail completely for parabolic functions. 

    In \cite[Section 4]{BK}, the authors construct a simply connected domain $\Omega\subsetneq\C$, with the following properties:
    \begin{enumerate}
        \item $z+t\in\Omega$, for all $t\geq0$ and any $z\in \Omega$;
        \item $\R\subset\Omega$;
        \item the family of holomorphic functions $\phi_t\colon \Omega\to\Omega$, with $\phi_t(z)=z+t$ is a parabolic semigroup in $\Omega$. 
    \end{enumerate}
    Let us remark that even though the definition of a semigroup presented in Example \ref{ex: hyperbolic semigroup} was stated only for the right half-plane, the definition can easily be extended to any simply connected domain via a Riemann map. Similarly, all the notation introduced for self-maps of $\H$ (or $\D$) can be easily transferred to $\Omega$. With this in mind, we will construct our example in $\Omega$. 

    A key point in their construction is that there exist real sequences $\{x_n\},\{y_n\}\subset \R$, tending to $+\infty$ such that $x_n<y_n<x_{n+1}$, for any $n\in\N$, and 
    \begin{equation}\label{eq:ex Bets-Kar 1}
         d_\Omega(0,x_n)>d_\Omega(0,y_n).
    \end{equation}
    In fact, $x_n$ and $y_n$ can be chosen to be positive integers (it is evident from \cite[Lemma 4.1]{BK} that one may chose $x_n=3^n-2^n$ and $y_n=3^n+2^n$).

    Now consider the function $\phi_1\colon\Omega\to\Omega$ from the semigroup $(\phi_t)$; that is, $\phi_1(z)=z+1$.  Since the semigroup $(\phi_t)$ is parabolic, every function $\phi_t$, for $t>0$, is also parabolic as a self-map of $\Omega$ (see the angular derivative condition \eqref{eq:angular der for semigroups} in Example \ref{ex: hyperbolic semigroup}). Note that $\phi_1^{x_n}(0)=x_n$ and $\phi_1^{y_n}(0)=y_n$. Thus, \eqref{eq:ex Bets-Kar 1} can be rewritten as
    \[
    d_\Omega(0,\phi_1^{x_n}(0))>d_\Omega(0,\phi_1^{y_n}(0)),
    \]
    meaning that the sequence $\{d_\Omega(0,\phi_1^n(0))\}$ is not eventually increasing. 
\end{ex}

\section{Projections of sequences onto curves}\label{sect:projections}

This section contains the bulk of our technical results about projections of arbitrary sequences onto curves. As usual, our proofs will be carried out in the right half-plane. 

Our first lemma requires some set-up. Let us denote by $\Delta_\H(z,r)$ the \emph{pseudo-hyperbolic disc} of $\H$, centred at $z\in\H$ and of radius $r\in(0,1)$; i.e.
\begin{equation}\label{eq: pseudo-hyp discs}
    \Delta_\H(z,r)=\{w\in\H\colon \rho_{\H}(w,z)<r\}.
\end{equation}

Using formula \eqref{eq: useful pseudo-hyp formula}, we can rewrite \eqref{eq: pseudo-hyp discs} as 
\begin{equation}\label{eq: pseudo-hyp discs v2}
    \Delta_\H(z,r)=\left\{w\in\H\colon \lvert \overline{w}+z\rvert^2<4\frac{\mathrm{Re}\ z}{1-r^2}\ \mathrm{Re}\ w\right\}
\end{equation}

The following lemma is a classical result \cite[Proposition 2.1.3]{Abate} quantifying the idea that horodiscs are limits of pseudo-hyperbolic discs, restated for the case of the right half-plane. We include a short proof for the sake of completeness.

\begin{lm}[\cite{Abate}]\label{lm: disc convergence lemma}
    Let $\{p_n\}\subset \H$ be a sequence of points converging to infinity and $\{r_n\}\subset(0,1)$ a sequence converging to 1. If
    \[
    \lim_{n\to+\infty}\frac{\mathrm{Re}\ p_n}{(1-r_n^2)\lvert 1+p_n\rvert^2}=\lambda\in (0,+\infty), 
    \]
    then 
    \[
    \left\{z\in\C\colon \mathrm{Re} \ z >\frac{1}{4\lambda} \right\}\subseteq \liminf_{n\to+\infty} \Delta_\H(p_n,r_n).
    \]
\end{lm}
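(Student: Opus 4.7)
The plan is to unpack the characterization \eqref{eq: pseudo-hyp discs v2} of the pseudo-hyperbolic disc and show directly that any $z_0\in\C$ with $\mathrm{Re}\, z_0 > \tfrac{1}{4\lambda}$ eventually lies in $\Delta_\H(p_n,r_n)$. First I would fix such a $z_0$; since $\mathrm{Re}\, z_0>0$ we have $z_0\in\H$, and membership in $\Delta_\H(p_n,r_n)$ is equivalent to
\[
\lvert \overline{z_0}+p_n\rvert^2 < \frac{4\,\mathrm{Re}\, p_n}{1-r_n^2}\,\mathrm{Re}\, z_0.
\]
The natural normalisation is to divide both sides by $\lvert 1+p_n\rvert^2$, rewriting the inequality as
\[
\frac{\lvert \overline{z_0}+p_n\rvert^2}{\lvert 1+p_n\rvert^2}  <  \frac{4\,\mathrm{Re}\, p_n}{(1-r_n^2)\lvert 1+p_n\rvert^2}\,\mathrm{Re}\, z_0.
\]

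The two sides are then handled by separate asymptotic computations. On the right, the hypothesis gives directly that the quantity converges to $4\lambda\,\mathrm{Re}\, z_0$, which is strictly greater than $1$ by the choice of $z_0$. On the left, the fact that $p_n\to\infty$ means $\lvert p_n\rvert\to+\infty$, so expanding
\[
\lvert \overline{z_0}+p_n\rvert^2 = \lvert p_n\rvert^2 + 2\,\mathrm{Re}(z_0 p_n) + \lvert z_0\rvert^2, \qquad \lvert 1+p_n\rvert^2 = \lvert p_n\rvert^2 + 2\,\mathrm{Re}\, p_n + 1,
\]
both numerator and denominator are of the form $\lvert p_n\rvert^2 + O(\lvert p_n\rvert)$, so the ratio tends to $1$. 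Combining the two limits, for all sufficiently large $n$ the left-hand side is strictly less than the right-hand side, which means $z_0\in\Delta_\H(p_n,r_n)$ eventually, i.e.\ $z_0\in\liminf_{n\to+\infty}\Delta_\H(p_n,r_n)$. There is no serious obstacle here; the only minor care needed is ensuring $z_0$ lies in $\H$, which is automatic from $\mathrm{Re}\, z_0 > \tfrac{1}{4\lambda}>0$.
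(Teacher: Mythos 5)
Your proof is correct and follows essentially the same route as the paper's: both rest on the characterization \eqref{eq: pseudo-hyp discs v2}, normalise by $\lvert 1+p_n\rvert^2$, use the hypothesis to identify the limit $4\lambda\,\mathrm{Re}\,z_0>1$, and use $p_n\to\infty$ with $z_0$ fixed to control the quotient $\lvert\overline{z_0}+p_n\rvert^2/\lvert 1+p_n\rvert^2$. The only difference is cosmetic: you argue directly that the membership inequality holds for all large $n$, while the paper phrases the same estimate as a proof by contradiction along a subsequence.
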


\begin{proof}
    Arguing towards a contradiction, we assume that there exists a point $z_0\in\H$, with $\mathrm{Re}\ z_0>\tfrac{1}{4\lambda}$, and a sequence of integers $\{n_k\}$, so that $z_0\notin \Delta_\H(p_{n_k},r_{n_k})$, for all $k\in\N$. By \eqref{eq: pseudo-hyp discs v2}, this means that
    \begin{equation}\label{eq: disc convergence eq}
        4\ \mathrm{Re}\ z_0\frac{\mathrm{Re}\ p_{n_k}}{1-r_{n_k}^2}\leq \lvert \overline{z_0} +p_{n_k}\rvert^2 \leq \left(1+\frac{\lvert\overline{z_0} +1\rvert}{\lvert 1+p_{n_k}\rvert}\right)^2\ \lvert 1+p_{n_k}\rvert^2.
    \end{equation}
    Since $z_0$ is fixed and $p_n$ tends to infinity, we have that 
    \[
    \lim_{k\to+\infty}\frac{\lvert\overline{z_0} +1\rvert}{\lvert 1+p_{n_k}\rvert} =0.
    \]
    So, taking limits $k\to+\infty$ in \eqref{eq: disc convergence eq} yields 
    \[
    4\lambda \ \mathrm{Re}\ z_0\leq 1,
    \]
    which leads to a contradiction.
\end{proof}

We can now start our analysis by proving that in the setting of Theorem \ref{thm: main monotonicity result} (the half-plane version of Theorem \ref{thm: main result}), the projections tend to the point at infinity.

\begin{lm}\label{lm:convergence of projections to zeta}
    Let $\gamma\colon[0,+\infty) \to\H$ be a curve landing at infinity, with 
    \[
    \lim_{t\to+\infty}\mathrm{arg}(\gamma(t))=\theta, \quad  \text{for some}\ \theta\in(-\tfrac{\pi}{2}, \tfrac{\pi}{2}).
    \]
    If $\{z_n\}\subset\H$ is a sequence converging to infinity and $\pi_\gamma(z_n)$ is a projection of $z_n$ onto $\gamma$, for $n\in\N$, then
    \[
    \lim_{n\to+\infty}\pi_\gamma(z_n)=\infty.
    \]
\end{lm}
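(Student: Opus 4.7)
The plan is to argue by contradiction. Suppose the sequence $\{\pi_\gamma(z_n)\}$ does not converge to infinity; then some subsequence $\{\pi_\gamma(z_{n_k})\}$ is bounded in modulus, say by $M>0$. Using the formula \eqref{eq:cosh in H}, together with the reverse triangle inequality $|z_{n_k}+\overline{\pi_\gamma(z_{n_k})}|\geq |z_{n_k}|-M$ (valid for $k$ large) and the trivial bound $\mathrm{Re}(\pi_\gamma(z_{n_k}))\leq M$, one obtains the lower estimate
\[
\cosh^2 d_\H(z_{n_k},\pi_\gamma(z_{n_k})) \;\geq\; \frac{(|z_{n_k}|-M)^2}{4M\,\mathrm{Re}(z_{n_k})}.
\]

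To contradict this, I would produce a competitor sequence $\{\gamma(s_k)\}$ on the curve, having the same modulus as $z_{n_k}$, for which the distance can be estimated directly. Fix $\epsilon>0$ small enough that $[\theta-\epsilon,\theta+\epsilon]\subset\left(-\tfrac{\pi}{2},\tfrac{\pi}{2}\right)$, and use the slope hypothesis to pick $S>0$ so that $\mathrm{arg}(\gamma(s))\in(\theta-\epsilon,\theta+\epsilon)$ for all $s\geq S$. Since $s\mapsto|\gamma(s)|$ is continuous and tends to $+\infty$, its image on $[S,+\infty)$ is an unbounded interval containing $|\gamma(S)|$; by the intermediate value theorem, for $k$ large enough we can select $s_k\geq S$ with $|\gamma(s_k)|=|z_{n_k}|$. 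Writing $\alpha_k=\mathrm{arg}(z_{n_k})$ and $\beta_k=\mathrm{arg}(\gamma(s_k))\in(\theta-\epsilon,\theta+\epsilon)$, a direct substitution into \eqref{eq:cosh in H} (after expanding $|z_{n_k}+\overline{\gamma(s_k)}|^2 = |z_{n_k}|^2(2+2\cos(\alpha_k+\beta_k))$) yields
\[
\cosh^2 d_\H(z_{n_k},\gamma(s_k)) \;=\; \frac{\cos^2\!\bigl((\alpha_k+\beta_k)/2\bigr)}{\cos\alpha_k\,\cos\beta_k}.
\]

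Combining the two estimates via the projection inequality $d_\H(z_{n_k},\pi_\gamma(z_{n_k}))\leq d_\H(z_{n_k},\gamma(s_k))$, and using $\mathrm{Re}(z_{n_k})=|z_{n_k}|\cos\alpha_k$ to cancel the common factor of $\cos\alpha_k$, one arrives at
\[
\frac{(|z_{n_k}|-M)^2}{4M|z_{n_k}|} \;\leq\; \frac{\cos^2\!\bigl((\alpha_k+\beta_k)/2\bigr)}{\cos\beta_k} \;\leq\; \frac{1}{\min\{\cos(\theta-\epsilon),\cos(\theta+\epsilon)\}}.
\]
The left-hand side tends to $+\infty$ as $k\to+\infty$, whereas the right-hand side is bounded, producing the desired contradiction. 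The only delicate point is the intermediate value construction of $s_k$: the whole scheme hinges on being able to choose a point on $\gamma$ simultaneously matching the modulus of $z_{n_k}$ and having argument close to $\theta$, and this is precisely where the assumption $\theta\in\left(-\tfrac{\pi}{2},\tfrac{\pi}{2}\right)$ enters, keeping $\cos\beta_k$ bounded away from zero so that the upper bound above is finite.
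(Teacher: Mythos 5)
Your argument is correct, and it reaches the conclusion by a genuinely different route than the paper. The paper also argues by contradiction, but after extracting a subsequence of projections converging to some $z_0\in\H$ it works with the discs $D_\H(z_n,d_\H(z_n,\pi_\gamma(z_n)))$, which must avoid the trace of $\gamma$; it then applies the disc-convergence result (Lemma \ref{lm: disc convergence lemma}, the half-plane version of \cite[Proposition 2.1.3]{Abate}) to show that the limit inferior of these pseudo-hyperbolic discs contains the half-plane $\{\mathrm{Re}\,z>\mathrm{Re}\,z_0\}$, which the curve must enter because $\arg\gamma(t)\to\theta\in(-\tfrac{\pi}{2},\tfrac{\pi}{2})$ forces $\mathrm{Re}\,\gamma(t)\to+\infty$; a single fixed point $\gamma(t_0)$ deep in that half-plane then beats the alleged projections. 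You replace this horodisc machinery by a direct estimate with the $\cosh$ formula \eqref{eq:cosh in H}: a bounded projection forces $\cosh^2 d_\H(z_{n_k},\pi_\gamma(z_{n_k}))\gtrsim |z_{n_k}|/(4M)$ (after the cancellation of $\cos\alpha_k$, which is the clean way to handle an arbitrary approach direction of $z_n$), while the modulus-matched competitor $\gamma(s_k)$, produced by the intermediate value theorem, keeps $\cosh^2 d_\H(z_{n_k},\gamma(s_k))$ bounded by $1/\min\{\cos(\theta-\epsilon),\cos(\theta+\epsilon)\}$; the non-tangentiality hypothesis enters precisely there, just as in the paper it enters through the unboundedness of $\mathrm{Re}\,\gamma$. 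Your computation is more elementary and self-contained (it bypasses Lemma \ref{lm: disc convergence lemma} entirely), at the cost of using competitor points that move with $k$ and of relying on the full limit $\arg\gamma(t)\to\theta$; the paper's horodisc argument, using one fixed competitor point, is what makes the remark after the lemma immediate, namely that the conclusion persists if one only assumes the slope of $\gamma$ contains some angle in $(-\tfrac{\pi}{2},\tfrac{\pi}{2})$.
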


\begin{proof}
    For simplicity, we write $\pi_n\vcentcolon=\pi_\gamma(z_n)$. Since $\pi_n\in\gamma([0,+\infty))$, for all $n\in\N$, and $\gamma(t)$ converges to $\infty$, as $t\to+\infty$, it suffices to show that $\{\pi_n\}$ does not accumulate in $\H$. Assume, towards a contradiction, that the sequence $\{\pi_n\}$ is relatively bounded in $\H$. By passing to a subsequence if necessary, we can then assume that $\{\pi_n\}$ converges to some $z_0\in\H$, as $n\to+\infty$.\\
    Since $\pi_n$ is the projection of $z_n$ onto $\gamma$, the hyperbolic discs $D_\H(z_n,d_\H(z_n,\pi_n))$ do not intersect $\gamma([0,+\infty))$, for any $n\in\N$. Note that the formulas for the hyperbolic and pseudo-hyperbolic metrics, \eqref{eq:hyperbolic distance in H} and \eqref{eq: pseudo-hyperbolic metric} respectively, imply that $D_\H(z_n,d_\H(z_n,\pi_n))=\Delta_\H(z_n,\rho_\H(z_n,\pi_n))$, where $\Delta_\H$ are the pseudo-hyperbolic discs defined in \eqref{eq: pseudo-hyp discs}. Because $\{z_n\}$ converges to infinity, as $n\to+\infty$, we have that 
    \[
    \lim_{n\to+\infty}\rho_\H(z_n,\pi_n)=1.
    \]
    Our goal is to apply the Disc Convergence Lemma, Lemma \ref{lm: disc convergence lemma}, to the pseudo-hyperbolic discs $\Delta_\H(z_n,\rho(z_n,\pi_n))$. To do so, observe that the formula \eqref{eq: useful pseudo-hyp formula} implies that
    \begin{equation}\label{eq: convergence to zeta eq1}
        \frac{\mathrm{Re}\ z_n}{(1-\rho_\H(z_n,\pi_n)^2)\lvert 1+z_n\rvert^2}=\frac{\lvert \overline{z_n}+\pi_n\rvert^2}{4\mathrm{Re}\ \pi_n \lvert 1+z_n\rvert^2}.
    \end{equation}
    Because $\pi_n$ is bounded, we have that 
    \[
    \lim_{n\to+\infty}\frac{\lvert \overline{z_n}+\pi_n\rvert^2}{\lvert 1+z_n\rvert^2}=1.
    \]
    So, taking limits in \eqref{eq: convergence to zeta eq1} yields
    \[
    \lim_{n\to+\infty}\frac{\mathrm{Re}\ z_n}{(1-\rho_\H(z_n,\pi_n)^2)\lvert 1+z_n\rvert^2} = \frac{1}{4\ \mathrm{Re}\ z_0}>0.
    \]
    Thus, Lemma \ref{lm: disc convergence lemma} is indeed applicable to the sequence of discs $\Delta_\H(z_n,\rho_\H(z_n,\pi_n))$ and implies that 
    \begin{equation}\label{eq: convergence to zeta eq2}
        \{z\in\C\colon \mathrm{Re}\ z> \mathrm{Re}\ z_0\}\subseteq \liminf_{n\to+\infty} \Delta_\H(z_n,\rho_\H(z_n,\pi_n)).
    \end{equation}
    Recall that by assumption, $\mathrm{arg}(\gamma(t))$ converges to $\theta\in(-\tfrac{\pi}{2}, \tfrac{\pi}{2})$ and $\gamma(t)$ converges to $\infty$. This means that the set $\gamma([0,+\infty))$ intersects all half-planes $\{z\in\C\colon \mathrm{Re}\ z>x\}$, for $x>0$. Therefore, by \eqref{eq: convergence to zeta eq2} we get that 
    \[
    \gamma([0,+\infty))\cap\liminf_{n\to+\infty} \Delta_\H(z_n,\rho_\H(z_n,\pi_n))\neq\emptyset,
    \]
    meaning that there exist $t_0\geq0$ and $n_0\in\N$ so that $\gamma(t_0)\in\Delta_\H(z_n,\rho_\H(z_n,\pi_n))$, for all $n\geq n_0$, contradicting the fact that $\pi_n$ are projections of $z_n$.
\end{proof}

\medskip

Note that by Example \ref{ex: tangential slope example}, the conclusion of Lemma \ref{lm:convergence of projections to zeta} fails if we assume that $\mathrm{arg}(\gamma(t))$ converges to $\pm\tfrac{\pi}{2}$. However, using the same proof, one could reduce the assumption of Lemma \ref{lm:convergence of projections to zeta} to: ``the slope of $\gamma$ contains an angle $\theta\in(-\tfrac{\pi}{2}, \tfrac{\pi}{2})$", recalling that the slope of $\gamma$ is the cluster set of $\arg(\gamma(t))$.

\medskip

Before moving on, we state the following handy fact, whose proof is a simple exercise in calculus.

\begin{lm}\label{lm:cosh fact}
    If $\{x_n\},\{y_n\}\subset \R$ are real sequences with $\lim\limits_{n\to+\infty}(x_n-y_n)=0$, then 
    \[
    \lim_{n\to+\infty} \frac{\cosh x_n}{\cosh y_n}=1.
    \]
\end{lm}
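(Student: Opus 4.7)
The plan is to reduce the ratio to a simple expression in $\epsilon_n := x_n - y_n$ via the addition formula for $\cosh$. Writing $x_n = y_n + \epsilon_n$, the identity
\[
\cosh(y_n + \epsilon_n) = \cosh y_n \cosh \epsilon_n + \sinh y_n \sinh \epsilon_n
\]
gives, after dividing by $\cosh y_n$,
\[
\frac{\cosh x_n}{\cosh y_n} = \cosh \epsilon_n + \tanh(y_n)\, \sinh \epsilon_n.
\]
Since $\epsilon_n \to 0$, we have $\cosh \epsilon_n \to 1$ and $\sinh \epsilon_n \to 0$. The key point is that although $\sinh y_n$ may blow up, $\tanh y_n$ is bounded by $1$ in absolute value, so the second term vanishes in the limit. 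Conclude that the ratio tends to $1$.

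The only potential pitfall is the unboundedness of the sequences $\{x_n\}$ and $\{y_n\}$ themselves, which could make termwise asymptotic arguments (like replacing $\cosh x_n$ with $\tfrac{1}{2}e^{|x_n|}$) a bit awkward if not handled uniformly. The use of $\tanh$ sidesteps this issue entirely, since $\tanh$ is a globally bounded function on $\mathbb{R}$. Thus no case split on whether $\{y_n\}$ is bounded or not is needed, and the proof is a single line after invoking the addition formula.
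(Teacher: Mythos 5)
Your proof is correct. Note that the paper itself offers no argument for this lemma---it is stated with the remark that the proof is ``a simple exercise in calculus''---so there is no proof to compare against; your computation, $\frac{\cosh x_n}{\cosh y_n}=\cosh\epsilon_n+\tanh(y_n)\sinh\epsilon_n$ with $\epsilon_n=x_n-y_n\to0$ and $\lvert\tanh y_n\rvert<1$, is precisely the kind of one-line verification the authors had in mind, and the observation that the bounded factor $\tanh y_n$ absorbs the possible unboundedness of $\{y_n\}$ (note also $\cosh y_n\geq1$, so the division is always legitimate) closes the only point where care is needed.
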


The next theorem is the main result of this section and shows that projections of a sequence onto different curves landing with the same angle are asymptotically close.

\begin{theorem}\label{lm:orthogonal lemma 2}
	Let $\gamma_j:[0,+\infty)\to\H$, $j=1,2$, be two curves landing at $\infty$, with 
    \[
    \lim_{t\to+\infty}\mathrm{arg}(\gamma_j(t))=\theta,
    \]
    for some $\theta\in(-\tfrac{\pi}{2},\tfrac{\pi}{2})$ and $j=1,2$. Let $\{z_n\}\subset\H$ be a sequence converging to $\infty$. If $\pi_{\gamma_j}(z_n)$ is a projection of $z_n$ on $\gamma_j$, for $j=1,2$ and $n\in\N$, then
    \[
    \lim_{n\to+\infty}d_\H(\pi_{\gamma_1}(z_n),\pi_{\gamma_2}(z_n))=0.
    \]
\end{theorem}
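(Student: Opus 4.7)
The plan is to reduce the conclusion $d_\H(\pi_{\gamma_1}(z_n),\pi_{\gamma_2}(z_n))\to 0$, via Lemma~\ref{lm:sequences lemma in H}, to the comparison of moduli. Write $z_n = r_n e^{i\alpha_n}$ and $\pi_{j,n}\vcentcolon=\pi_{\gamma_j}(z_n) = R_{j,n}e^{i\phi_{j,n}}$. By hypothesis $r_n\to+\infty$, and by Lemma~\ref{lm:convergence of projections to zeta} we have $\pi_{j,n}\to\infty$, so $R_{j,n}\to+\infty$. Writing $\pi_{j,n}=\gamma_j(t_{j,n})$, continuity of $\gamma_j$ forces $t_{j,n}\to+\infty$ (else a convergent subsequence of $t_{j,n}$ would keep $\pi_{j,n}$ bounded in $\H$), and combined with $\arg\gamma_j(t)\to\theta$ this gives $\phi_{j,n}\to\theta$. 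Since both $\phi_{1,n},\phi_{2,n}\to\theta\in(-\tfrac{\pi}{2},\tfrac{\pi}{2})$, Lemma~\ref{lm:sequences lemma in H} reduces the theorem to showing $R_{1,n}/R_{2,n}\to 1$, which would follow from $R_{j,n}/r_n\to 1$ for each $j=1,2$.

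To establish the latter, let $\hat\pi_n\vcentcolon= r_n e^{i\theta}$, which by a swap of roles in Lemma~\ref{lm: properties of metric in H}(1) is the unique projection of $z_n$ onto the ray $\{se^{i\theta}:s>0\}$. The intermediate goal is to show $d_\H(z_n, R_{j,n}e^{i\theta}) - d_\H(z_n,\hat\pi_n)\to 0$. Given $\eta>0$, Lemma~\ref{lm: properties of metric in H}(2) together with $\arg\gamma_j(t)\to\theta$ produces $T$ such that $d_\H(\gamma_j(t),|\gamma_j(t)|e^{i\theta})<\eta$ for every $t>T$. Since $|\gamma_j|$ is continuous and tends to $+\infty$, for all $n$ large the intermediate value theorem supplies $t^\ast>T$ with $|\gamma_j(t^\ast)|=r_n$, whence
\[
d_\H(z_n,\pi_{j,n})\le d_\H(z_n,\gamma_j(t^\ast))\le d_\H(z_n,\hat\pi_n)+\eta.
\]
Moreover, $\phi_{j,n}\to\theta$ and Lemma~\ref{lm: properties of metric in H}(2) yield $d_\H(\pi_{j,n},R_{j,n}e^{i\theta})\to 0$. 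Combining the triangle inequality with the optimality $d_\H(z_n,R_{j,n}e^{i\theta})\ge d_\H(z_n,\hat\pi_n)$ gives $0\le d_\H(z_n,R_{j,n}e^{i\theta}) - d_\H(z_n,\hat\pi_n)\le\eta+o(1)$, and $\eta$ is arbitrary.

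The proof closes with an algebraic identity obtained by expanding formula~\eqref{eq:cosh in H}, namely
\[
\cosh^2 d_\H(re^{i\alpha},se^{i\theta}) - \cosh^2 d_\H(re^{i\alpha},re^{i\theta}) = \frac{(r-s)^2}{4rs\cos\alpha\cos\theta}.
\]
The estimate from the previous paragraph, together with Lemma~\ref{lm:cosh fact} (used to cover the possibly unbounded case $d_\H(z_n,\hat\pi_n)\to\infty$ that arises when $\alpha_n\to\pm\tfrac{\pi}{2}$), implies $\cosh^2 d_\H(z_n,R_{j,n}e^{i\theta})/\cosh^2 d_\H(z_n,\hat\pi_n)\to 1$. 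Substituting $r=r_n$, $s=R_{j,n}$, $\alpha=\alpha_n$ into the identity and dividing through by $\cosh^2 d_\H(z_n,\hat\pi_n)=(1+\cos(\alpha_n+\theta))/(2\cos\alpha_n\cos\theta)$ reduces the conclusion to $(r_n-R_{j,n})^2/(r_nR_{j,n})\to 0$, which is equivalent to $R_{j,n}/r_n\to 1$. The main difficulty is pinpointing this clean identity: it collapses the comparison of two hyperbolic distances into a manifestly positive perfect-square expression, after which everything is bookkeeping.
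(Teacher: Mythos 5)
Your proposal is correct and follows essentially the same route as the paper's proof: compare with the reference ray $te^{i\theta}$, use the minimizing property of projections together with the asymptotic closeness of $\gamma_j$ to that ray (your point $\gamma_j(t^\ast)$ is exactly the paper's auxiliary point $w_n$) to get $d_\H(z_n,R_{j,n}e^{i\theta})-d_\H(z_n,r_ne^{i\theta})\to0$, pass through Lemma \ref{lm:cosh fact} and formula \eqref{eq:cosh in H} to conclude $R_{j,n}/r_n\to1$, and finish with Lemma \ref{lm:sequences lemma in H}. The only differences are in execution: your perfect-square identity replaces the paper's accumulation-point argument for deducing $\lvert \pi^1_n/z_n\rvert\to1$ (a neat simplification that also avoids splitting into the subsequences $\lvert z_n\rvert\gtrless\lvert\pi^1_n\rvert$), and you apply Lemma \ref{lm:sequences lemma in H} once at the end to compare $R_{1,n}$ with $R_{2,n}$ rather than showing each projection is asymptotically close to $\lvert z_n\rvert e^{i\theta}$.
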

\begin{proof}
	In order to simplify notation, we write $\pi_n^j\vcentcolon=\pi_{\gamma_j}(z_n)$ for $j=1,2$ and all $n\in\N$. Note that Lemma \ref{lm:convergence of projections to zeta} is applicable and yields that both sequences $\{\pi_n^1\},\{\pi_n^2\}$ converge to $\infty$, as $n\to+\infty$.\\
    Consider the straight half-line $\gamma\colon[1,+\infty)\to\H$, with $\gamma(t)=te^{i\theta}$. Recall from Lemma \ref{lm: sectors in H} that for all $R>0$, we have that
    \[
    S_R\vcentcolon= \{z\in\H\colon d_\H(z,\gamma)<R\}= D_\H(e^{i\theta},R)\cup\{re^{i\phi}\colon r>1, \ \phi\in(\phi_1,\phi_2)\},
    \]
    for some $\phi_1\in(-\tfrac{\pi}{2},\theta)$ and $\phi_2\in(\theta,\tfrac{\pi}{2})$ satisfying $d_\H(e^{i\theta},e^{i\phi_1})=d_\H(e^{i\theta},e^{i\phi_2})=R$. So, since $\gamma_1$ lands at $\infty$ with $\lim_{t\to+\infty}\arg(\gamma_1(t))=\theta$, we get that for every $R>0$, there exists a $t_0\geq0$, so that $\gamma_1([t_0,+\infty))\subset S_R$. So, for any sequence $\{\zeta_n\}\subset \gamma_1([0,+\infty))$, converging to $\infty$, we have that
    \begin{equation}\label{eq:orthogonal speed lemma 2, eq1}
        \lim_{n\to+\infty}d_\H(\zeta_n,\gamma)=0.
    \end{equation}
    Observe that due to Lemma \ref{lm: properties of metric in H} (1), the unique projections of $z_n$ and $\pi_n^1$ onto $\gamma$ are $\lvert z_n\rvert e^{i\theta}$ and $\lvert \pi_n^1\rvert e^{i\theta}$, respectively, for sufficiently large $n\in\N$ (see Figure \ref{fig:projection}). Since $\{\pi_n^1\}\subset\gamma_1([0,+\infty))$ and $\pi^1_n\xrightarrow{n\to+\infty}\infty$, \eqref{eq:orthogonal speed lemma 2, eq1} yields
    \begin{equation}\label{eq:orthogonal speed lemma 2, eq2}
    \lim_{n\to+\infty}d_\H(\pi^1_n, \lvert \pi^1_n \rvert e^{i\theta})=0.
    \end{equation}
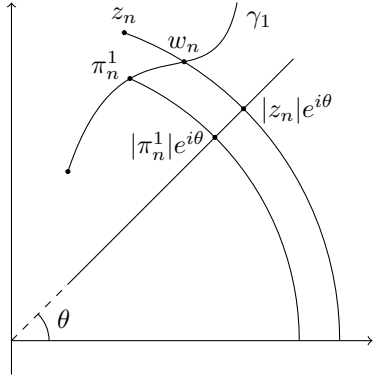
\begin{figure}[ht]
        \centering
        \begin{tikzpicture}[scale=1.5]
        \coordinate (r) at (1,0);
        \coordinate (o) at (0,0);
        \coordinate (theta) at (1,1);
            \draw[->] (0,-0.3) -- (0,3);
            \draw[->] (0,0) -- (3.2,0);
            \node[below right] at (2,3) {$\gamma_1$};
            \draw[dashed] (0,0) -- (0.5,0.5);
            \draw (0.5,0.5) -- (2.5,2.5);
            \draw (0.5, 1.5)  .. controls (1,3) and (1.8,2) .. (2,3);
            
            \filldraw[black] (0.5,1.5) circle (.5pt);
            \filldraw[black] (2.057,2.057) circle (.5pt) node[right, xshift=3pt] {$\lvert z_n \rvert e^{i\theta}$};
            \draw [domain=0:69.9] plot ({2.909*cos(\x)}, {2.909*sin(\x)});
            \filldraw[black] (1,2.732) circle (.5pt) node[above] {$z_n$};
            \filldraw[black] (1.803,1.803) circle (.5pt) node[left, yshift=-3pt] {$\lvert \pi^1_n\rvert e^{i\theta}$};
            \filldraw[black] (1.53,2.474) circle (.5pt) node[above] {$w_n$};
            \filldraw[black] (1.05,2.325) circle (.5pt) node[left, yshift=5pt] {$\pi^1_n$};
            \draw [domain=0:65.695] plot ({2.551*cos(\x)}, {2.551*sin(\x)});
            
            \pic [draw, "$\theta$", angle eccentricity=1.5] {angle = r--o--theta};
        \end{tikzpicture}
        \caption{The projections in Theorem \ref{lm:orthogonal lemma 2}.}
        \label{fig:projection}
    \end{figure}
    
    Now, since, for all $n\in\N$ large, $\lvert z_n \rvert e^{i\theta}$ is the projection of $z_n$ onto $\gamma$ we get that
    \begin{equation}\label{eq:orthogonal speed lemma 2, eq3}
        d_\H(z_n,\lvert z_n \rvert e^{i\theta}) - d_\H(z_n,\lvert \pi^1_n \rvert e^{i\theta})\leq0.
    \end{equation}
    Let $w_n$ be a point of intersection of the geodesic $\{z\in\H \colon \lvert z \rvert = \lvert z_n \rvert\}$ with the trace $\gamma_1([0,+\infty))$, which is well-defined for all $n\in\N$ large. Note that $w_n$ tends to $\infty$, as $n\to+\infty$, and the projection of $w_n$ onto $\gamma$ is $\lvert z_n \rvert e^{i\theta}$. So, applying \eqref{eq:orthogonal speed lemma 2, eq1} once more yields
    \begin{equation}\label{eq:orthogonal speed lemma 2, eq4}
        \lim_{n\to+\infty}d_\H(w_n,\lvert z_n \rvert e^{i\theta})=0.
    \end{equation}
    Using the triangle inequality we have
    \begin{align*}
        d_\H(z_n,\lvert z_n \rvert e^{i\theta}) &\geq d_\H(z_n,w_n) - d_\H(w_n,\lvert z_n \rvert e^{i\theta})\\
            &\geq d_\H(z_n,\pi^1_n) - d_\H(w_n,\lvert z_n \rvert e^{i\theta}) \qquad \text{($\pi^1_n$ is the projection of $z_n$ onto $\gamma_1$)}\\
            &\geq d_\H(z_n, \lvert \pi^1_n \rvert e^{i\theta}) -d_\H(\lvert \pi^1_n \rvert e^{i\theta}, \pi^1_n) - d_\H(w_n,\lvert z_n \rvert e^{i\theta}),
    \end{align*}
    which can be rewritten as 
    \begin{equation}\label{eq:orthogonal speed lemma 2, eq5}
        d_\H(z_n,\lvert z_n \rvert e^{i\theta}) - d_\H(z_n, \lvert \pi^1_n \rvert e^{i\theta}) \geq -d_\H(\lvert \pi^1_n \rvert e^{i\theta}, \pi^1_n)- d_\H(w_n,\lvert z_n \rvert e^{i\theta}).
    \end{equation}
    Combining \eqref{eq:orthogonal speed lemma 2, eq2}, \eqref{eq:orthogonal speed lemma 2, eq3}, \eqref{eq:orthogonal speed lemma 2, eq4} and \eqref{eq:orthogonal speed lemma 2, eq5} yields 
    \begin{equation}\label{eq:orthogonal speed lemma 2, eq6}
    \lim_{n\to+\infty}\left(d_\H(z_n,\lvert z_n \rvert e^{i\theta}) - d_\H(z_n, \lvert \pi^1_n \rvert e^{i\theta})\right) =0.
    \end{equation}
   Our goal now is to show that
    \begin{equation}\label{eq:orthogonal speed lemma 2, eq7}
        \lim_{n\to+\infty} d_\H(\lvert \pi^1_n\rvert e^{i\theta}, \lvert z_n\rvert e^{i\theta})=0.
    \end{equation}
    Note that it suffices to assume that $\lvert z_n \rvert \neq \lvert \pi^1_n \rvert$, for all $n\in\N$. Moreover, by splitting $\{z_n\}$ and $\{\pi^1_n\}$ into two subsequences, we can assume that $\lvert z_n \rvert > \lvert \pi^1_n \rvert$ for all $n\in\N$; the proof for the subsequence satisfying $\lvert z_n \rvert < \lvert \pi^1_n \rvert$ is similar.\\
    The limit \eqref{eq:orthogonal speed lemma 2, eq6} implies that Lemma \ref{lm:cosh fact} is applicable to the real sequences $\{d_\H(z_n,\lvert z_n \rvert e^{i\theta})\}$ and $\{d_\H(z_n, \lvert \pi^1_n \rvert e^{i\theta})\}$, and yields
    \begin{equation}\label{eq:orthogonal speed lemma 2, eq8}
        \lim_{n\to+\infty}\frac{\cosh d_\H(z_n, \lvert \pi^1_n \rvert e^{i\theta})}{\cosh d_\H(z_n,\lvert z_n \rvert e^{i\theta})}=1.
    \end{equation}
    Write $\phi_n\vcentcolon=\mathrm{arg} \ z_n\in(-\tfrac{\pi}{2},\tfrac{\pi}{2})$. Using \eqref{eq:cosh in H} we obtain
    \[
        \frac{\cosh d_\H(z_n, \lvert \pi^1_n \rvert e^{i\theta})}{\cosh d_\H(z_n,\lvert z_n \rvert e^{i\theta})}=\frac{\lvert z_n  + \lvert \pi^1_n \rvert e^{-i\theta}\rvert}{\lvert z_n  + \lvert z_n \rvert e^{-i\theta}\rvert}\frac{\sqrt{\mathrm{Re}\ \lvert z_n \rvert e^{i\theta}}}{\sqrt{\mathrm{Re}\ \lvert \pi^1_n \rvert e^{i\theta}}} =\frac{\left\lvert e^{i(\phi_n+\theta)} + \left\lvert \frac{\pi^1_n}{z_n}\right\rvert\right\rvert}{\left\lvert e^{i(\phi_n+\theta)} + 1\right\rvert} \frac{1}{\sqrt{\left\lvert \frac{ \pi^1_n }{z_n}\right\rvert}}.
    \]
    Taking limits  as $n$ tends to $+\infty$, and using \eqref{eq:orthogonal speed lemma 2, eq8}, we obtain
    \begin{equation} \label{eq:orthogonal speed lemma 2, eq9}
    \lim_{n\to+\infty}\frac{\left\lvert e^{i(\phi_n+\theta)} + \left\lvert \frac{\pi^1_n}{z_n}\right\rvert\right\rvert}{\left\lvert e^{i(\phi_n+\theta)} + 1\right\rvert} \frac{1}{\sqrt{\left\lvert \frac{ \pi^1_n }{z_n}\right\rvert}}=1.
    \end{equation}
    Due to our assumption we have that $\left\lvert \frac{ \pi^1_n }{z_n}\right\rvert\in(0,1)$, for all $n\in\N$. We are going to show that 
    \begin{equation}\label{eq:orthogonal speed lemma 2, eq10}
        \lim_{n\to+\infty}\left\lvert \frac{ \pi^1_n }{z_n}\right\rvert=1.
    \end{equation}
    Let $x_0\in[0,1]$ be an accumulation point of $\left\lvert \frac{ \pi^1_n }{z_n}\right\rvert$. By passing to a subsequence if necessary we suppose that $\{\phi_n+\theta\}$ converges to some $\phi\in(-\pi,\pi)$; the fact that $\phi\neq\pm\pi$ follows from $\theta\in(-\tfrac{\pi}{2},\tfrac{\pi}{2})$. Observe that $x_0\neq 0$, because otherwise 
    \[
    \lim_{n\to+\infty}\frac{1}{\sqrt{\left\lvert \frac{ \pi^1_n }{z_n}\right\rvert}}=+\infty,
    \]
    while 
    \[
    \lim_{n\to+\infty}\frac{\left\lvert e^{i(\phi_n+\theta)} + \left\lvert \frac{\pi^1_n}{z_n}\right\rvert\right\rvert}{\left\lvert e^{i(\phi_n+\theta)} + 1\right\rvert} = \frac{1}{\lvert e^{i\phi} +1\rvert }\geq \frac{1}{2},
    \]
    contradicting the finiteness of the limit in \eqref{eq:orthogonal speed lemma 2, eq9}. Thus, \eqref{eq:orthogonal speed lemma 2, eq9} can be rewritten as
    \[
    \frac{\left\lvert e^{i\phi} + x_0\right\rvert}{\left\lvert e^{i\phi} + 1\right\rvert} \frac{1}{\sqrt{x_0}}=1,
    \]
    which in turn is equivalent to
    \[
    \frac{(\cos\phi +x_0)^2+\sin^2\phi}{(\cos\phi +1)^2+\sin^2\phi} = x_0 \iff (x_0-1)^2=0.
    \]
    So, 1 is the only accumulation point of the bounded sequence  $\left\{\left\lvert \frac{ \pi^1_n }{z_n}\right\rvert\right\}$, proving \eqref{eq:orthogonal speed lemma 2, eq10}. Now, the formula \eqref{eq:hyperbolic distance in H} for the hyperbolic metric of $\H$ implies that 
    \[
    \lim_{n\to+\infty}d_\H(\lvert z_n \rvert , \lvert \pi^1_n \rvert)=\lim_{n\to+\infty}\frac{1}{2}\left\lvert \log\left\lvert \frac{ \pi^1_n }{z_n}\right\rvert\right\rvert=0.
    \]
    So, Lemma \ref{lm:sequences lemma in H} applied to the sequences $\{\lvert z_n \rvert e^{i\theta}\}$ and $\{\lvert \pi^1_n\rvert e^{i\theta}\}$ yields \eqref{eq:orthogonal speed lemma 2, eq7}.\\
    With \eqref{eq:orthogonal speed lemma 2, eq7} proved, we use the triangle inequality to obtain
    \begin{equation}\label{eq:orthogonal speed lemma 2, eq+}
    d_\H(\lvert z_n \rvert e^{i\theta}, \pi^1_n)\leq d_\H(\lvert z_n \rvert e^{i\theta}, \lvert \pi^1_n \rvert e^{i\theta}) + d_\H(\lvert \pi^1_n \rvert e^{i\theta}, \pi^1_n)
    \end{equation}
    But according to \eqref{eq:orthogonal speed lemma 2, eq2} and \eqref{eq:orthogonal speed lemma 2, eq7}, the right-hand side term in \eqref{eq:orthogonal speed lemma 2, eq+} tends to zero, as $n\to+\infty$. Hence,
    \begin{equation}\label{eq:orthogonal speed lemma 2, eq11}
        \lim_{n\to+\infty}d_\H(\lvert z_n \rvert e^{i\theta}, \pi^1_n)=0.
    \end{equation}
    All of the above arguments can be applied verbatim to the curve $\gamma_2$ and the projections $\{\pi^2_n\}$ in order to show that 
    \begin{equation}\label{eq:orthogonal speed lemma 2, eq12}
        \lim_{n\to+\infty}d_\H(\lvert z_n \rvert e^{i\theta}, \pi^2_n)=0.
    \end{equation}
    Finally, the limits \eqref{eq:orthogonal speed lemma 2, eq11} and \eqref{eq:orthogonal speed lemma 2, eq12} along with a simple use of the triangle inequality yield
    \[
    \lim_{n\to+\infty}d_\H(\pi^1_n, \pi^2_n)=0,
    \]
    as required.
\end{proof}

Using Theorem \ref{lm:orthogonal lemma 2} we can correlate the projections of a sequence onto curves that land with different angles.

\begin{cor}\label{cor:projections with different slopes}
Let $\gamma_j:[0,+\infty)\to\H$, $j=1,2$, be two curves landing at $\infty$, with 
\[
\lim\limits_{t\to+\infty}\mathrm{arg}(\gamma_j(t))=\theta_j,
\]
for some $\theta_j\in(-\tfrac{\pi}{2},\tfrac{\pi}{2})$ and $j=1,2$. Suppose that $\{z_n\}\subset\H$ is a sequence converging to $\infty$ and let $\pi_{\gamma_j}(z_n)$ be a projection of $z_n$ in $\gamma_j$, for $j=1,2$ and $n\in\N$. Then,
    \[
    \lim_{n\to+\infty}d_\H(\pi_{\gamma_1}(z_n),\pi_{\gamma_2}(z_n))=d_\H(e^{i\theta_1},e^{i\theta_2}).
    \]
\end{cor}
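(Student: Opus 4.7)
The plan is to reduce Corollary \ref{cor:projections with different slopes} to Theorem \ref{lm:orthogonal lemma 2} by introducing, for each $j \in \{1,2\}$, the auxiliary straight half-line $\eta_j\colon [1,+\infty)\to\H$ defined by $\eta_j(t)=te^{i\theta_j}$. This curve lands at $\infty$ with $\arg(\eta_j(t))\equiv\theta_j$, so the pair $(\gamma_j,\eta_j)$ satisfies the hypotheses of Theorem \ref{lm:orthogonal lemma 2}. The advantage of $\eta_j$ is that its projections are explicit: by Lemma \ref{lm: properties of metric in H}(1), for every $n$ large enough that $|z_n|\geq 1$, the unique projection of $z_n$ onto $\eta_j$ is the point $|z_n|e^{i\theta_j}$.

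The argument then proceeds in three short steps. First, applying Theorem \ref{lm:orthogonal lemma 2} to the pair $(\gamma_j,\eta_j)$ for each $j\in\{1,2\}$ yields
\[
\lim_{n\to+\infty} d_\H\bigl(\pi_{\gamma_j}(z_n),\,|z_n|e^{i\theta_j}\bigr)=0.
\]
Second, two applications of the triangle inequality give
\[
\bigl| d_\H(\pi_{\gamma_1}(z_n),\pi_{\gamma_2}(z_n)) - d_\H(|z_n|e^{i\theta_1},|z_n|e^{i\theta_2}) \bigr| \leq \sum_{j=1}^{2} d_\H\bigl(\pi_{\gamma_j}(z_n),\,|z_n|e^{i\theta_j}\bigr),
\]
whose right-hand side tends to zero by the first step. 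Third, by the radial invariance in Lemma \ref{lm: properties of metric in H}(2) we have $d_\H(|z_n|e^{i\theta_1},|z_n|e^{i\theta_2})=d_\H(e^{i\theta_1},e^{i\theta_2})$ for every $n$, and combining these three facts yields the stated limit.

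There is essentially no technical obstacle beyond verifying that Theorem \ref{lm:orthogonal lemma 2} applies with the chosen auxiliary $\eta_j$; the restriction $|z_n|\geq 1$ is harmless since $z_n\to\infty$, and the radial invariance embodied by Lemma \ref{lm: properties of metric in H}(2) is precisely what produces the limiting value $d_\H(e^{i\theta_1},e^{i\theta_2})$, independent of the modulus $|z_n|$. In this sense, Theorem \ref{lm:orthogonal lemma 2} has already done all the heavy lifting, and the corollary is a direct consequence of combining it with the explicit geometry of projections onto radial half-lines.
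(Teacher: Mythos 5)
Your proposal is correct and follows essentially the same route as the paper: introduce the radial half-lines $\eta_j(t)=te^{i\theta_j}$, apply Theorem \ref{lm:orthogonal lemma 2} to each pair $(\gamma_j,\eta_j)$, and conclude via the triangle inequality together with the radial invariance $d_\H(|z_n|e^{i\theta_1},|z_n|e^{i\theta_2})=d_\H(e^{i\theta_1},e^{i\theta_2})$ from Lemma \ref{lm: properties of metric in H}(2). The only cosmetic difference is that you compress the paper's two separate triangle-inequality estimates into the single inequality $\bigl| d_\H(\pi_{\gamma_1}(z_n),\pi_{\gamma_2}(z_n)) - d_\H(|z_n|e^{i\theta_1},|z_n|e^{i\theta_2}) \bigr| \leq \sum_{j} d_\H(\pi_{\gamma_j}(z_n),|z_n|e^{i\theta_j})$, which is a valid shortcut.
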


\begin{proof}
    Consider the straight half-lines $\eta_1\colon[1,+\infty)\to\H$ and $\eta_2\colon[1,+\infty)\to\H$, with 
    \[
    \eta_1(t) = te^{i\theta_1} \quad\text{and}\quad \eta_2(t) = te^{i\theta_2}, \quad \text{for}\ t\in[1,+\infty).
    \]
    Note that the projections of $z_n$ onto $\eta_1$ and $\eta_2$ are $\lvert z_n \rvert e^{i\theta_1}$ and $\lvert z_n \rvert e^{i\theta_2}$, respectively, for all $n$ large. Moreover, from Lemma \ref{lm: properties of metric in H} (2), for all $n\in\N$ we have $d_\H(\lvert z_n \rvert e^{i\theta_1}, \lvert z_n \rvert e^{i\theta_2}) =d_\H(e^{i\theta_1}, e^{i\theta_2})$. Then, applying Theorem \ref{lm:orthogonal lemma 2} to the pairs of curves $\gamma_1,\eta_1$ and $\gamma_2,\eta_2$, we have that 
    \begin{equation}\label{eq:projections with different slopes, eq1}
        \lim_{n\to+\infty}d_\H(\pi_{\gamma_1}(z_n),\lvert z_n \rvert e^{i\theta_1}) = \lim_{n\to+\infty}d_\H(\pi_{\gamma_2}(z_n),\lvert z_n \rvert e^{i\theta_2}) = 0.
    \end{equation}
    Using the triangle inequality, we obtain
    \begin{align*}
        d_\H(\lvert z_n \rvert e^{i\theta_2}, \pi_{\gamma_1}(z_n))&\geq d_\H(\lvert z_n \rvert e^{i\theta_2}, \lvert z_n \rvert e^{i\theta_1}) - d_\H(\lvert z_n \rvert e^{i\theta_1}, \pi_{\gamma_1}(z_n))\\
        &=d_\H( e^{i\theta_2},  e^{i\theta_1}) - d_\H(\lvert z_n \rvert e^{i\theta_1}, \pi_{\gamma_1}(z_n)),
    \end{align*}
    and similarly
    \[
    d_\H(\lvert z_n \rvert e^{i\theta_2}, \pi_{\gamma_1}(z_n)) \leq d_\H(e^{i\theta_2},  e^{i\theta_1}) + d_\H(\lvert z_n \rvert e^{i\theta_1}, \pi_{\gamma_1}(z_n)).
    \]
    So, \eqref{eq:projections with different slopes, eq1} implies that 
    \begin{equation}\label{eq:projections with different slopes, eq2}
        \lim_{n\to+\infty}d_\H(\lvert z_n \rvert e^{i\theta_2}, \pi_{\gamma_1}(z_n)) = d_\H(e^{i\theta_1}, e^{i\theta_2}).
    \end{equation}
    Another use of the triangle inequality yields
    \[
    d_\H(\pi_{\gamma_1}(z_n), \pi_{\gamma_2}(z_n)) \geq d_\H(\pi_{\gamma_1}(z_n), \lvert z_n \rvert e^{i\theta_2}) - d_\H(\lvert z_n \rvert e^{i\theta_2}, \pi_{\gamma_2}(z_n)),
    \]
    and 
    \[
    d_\H(\pi_{\gamma_1}(z_n), \pi_{\gamma_2}(z_n)) \leq  d_\H(\pi_{\gamma_1}(z_n), \lvert z_n \rvert e^{i\theta_2}) + d_\H(\lvert z_n \rvert e^{i\theta_2}, \pi_{\gamma_2}(z_n)).
    \]
    Using \eqref{eq:projections with different slopes, eq1} and \eqref{eq:projections with different slopes, eq2} we get
    \[
    \lim_{n\to\infty}d_\H(\pi_{\gamma_1}(z_n), \pi_{\gamma_2}(z_n)) = d_\H(e^{i\theta_1}, e^{i\theta_2}),
    \]
    and the proof is complete. 
\end{proof}

\section{Proofs of the monotonicity properties}\label{sect: monotonicity}

This section contains the proofs of our main result, Theorem \ref{thm: main result}, and its corollary, Corollary \ref{cor: main cor}. We start with a simple lemma that uses the angular derivative.

\begin{lm}\label{lm:orthogonal speed lemma}
	Let $f:\H\to\H$ be a hyperbolic map with Denjoy--Wolff point $\infty$. For any $z\in\H$ and $w\in\C$ the sequence $\{\lvert f^n(z) - w\rvert\}$ is eventually strictly increasing.
\end{lm}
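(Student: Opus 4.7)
The plan is to translate the statement into Euclidean terms and exploit the angular derivative at the Denjoy--Wolff point. Since $f$ is hyperbolic with $\infty$ as its Denjoy--Wolff point, we have $\lambda \vcentcolon= f'(\infty) > 1$. Moreover, by Theorem~\ref{thm: step and slope for hyperbolic maps}~(2), there exists $\phi\in(-\tfrac{\pi}{2},\tfrac{\pi}{2})$ so that $\arg(f^n(z))\to\phi$. In particular, from some step onward the orbit lies in a Stolz-type sector $\{z\in\H\colon |\arg z|<\psi\}$ for some $\psi\in(|\phi|,\tfrac{\pi}{2})$, and $|f^n(z)|\to+\infty$ in the Euclidean sense.

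With these facts in hand, I would apply the angular derivative formula \eqref{eq: angular derivative} along the orbit itself: writing $a_n\vcentcolon= f^n(z)$, and using that the sequence $\{a_n\}$ tends to $\infty$ inside a sector, one obtains
\[
\lim_{n\to+\infty}\frac{a_{n+1}}{a_n}=\lim_{n\to+\infty}\frac{f(a_n)}{a_n}=\lambda>1.
\]
Consequently,
\[
|a_{n+1}|^2-|a_n|^2=|a_n|^2\!\left(\left|\tfrac{a_{n+1}}{a_n}\right|^2-1\right)=(\lambda^2-1+o(1))|a_n|^2,
\]
and
\[
|a_{n+1}-a_n|=|a_n|\left|\tfrac{a_{n+1}}{a_n}-1\right|=(\lambda-1+o(1))|a_n|.
\]

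The main step is then an elementary algebraic expansion:
\[
|a_{n+1}-w|^2-|a_n-w|^2=|a_{n+1}|^2-|a_n|^2-2\,\mathrm{Re}\bigl(\bar w(a_{n+1}-a_n)\bigr).
\]
The first two terms on the right-hand side grow like $(\lambda^2-1)|a_n|^2$, while the last term is bounded in modulus by $2|w||a_{n+1}-a_n|=O(|a_n|)$. Since $|a_n|\to+\infty$, the quadratic positive growth dominates the linear error for all sufficiently large $n$, so the right-hand side is eventually strictly positive. This gives $|a_{n+1}-w|^2>|a_n-w|^2$ eventually, and hence $\{|f^n(z)-w|\}$ is eventually strictly increasing, as required.

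I do not expect any serious obstacle here: everything follows from the angular derivative at $\infty$ together with the convergence of $\arg(f^n(z))$ to a proper angle in $(-\tfrac{\pi}{2},\tfrac{\pi}{2})$ guaranteed by Theorem~\ref{thm: step and slope for hyperbolic maps}. The only subtle point worth stating carefully is that, because the orbit approaches $\infty$ through a sector strictly contained in the right half-plane, the angular limit \eqref{eq: angular derivative} applies along the sequence $\{a_n\}$ itself, which is what makes the ratio $a_{n+1}/a_n$ tend to $\lambda$ rather than merely to a number of modulus $\lambda$.
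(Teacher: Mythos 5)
Your proof is correct and rests on exactly the same two ingredients as the paper's: the angular derivative limit \eqref{eq: angular derivative} applied along the orbit, which is legitimate because Theorem~\ref{thm: step and slope for hyperbolic maps}~(2) keeps $\{f^n(z)\}$ in a sector, giving $f^{n+1}(z)/f^n(z)\to f'(\infty)>1$. The paper concludes more directly by noting that $\bigl\lvert (f^{n+1}(z)-w)/(f^n(z)-w)\bigr\rvert$ has the same limit $f'(\infty)>1$ since $w$ is fixed and $f^n(z)\to\infty$, whereas you reach the identical conclusion through the squared-modulus expansion and a dominance estimate; this is only a cosmetic difference.
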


\begin{proof}
    Because $f$ is hyperbolic, the angular derivative condition \eqref{eq: angular derivative} implies that
    \[
    f'(\infty)=\angle\lim_{z\to\infty}\frac{f(z)}{z}>1.
    \]
    Now, fix $z\in\H$ and $w\in\C$. Theorem \ref{thm: step and slope for hyperbolic maps} (2) implies that the sequence $\{f^n(z)\}$ tends to infinity through a sector $\{z\in\C\colon \lvert \mathrm{arg}\ z\rvert<\phi\}$, for some $\phi\in(-\tfrac{\pi}{2},\tfrac{\pi}{2})$. So, the above limit can be rewritten as
    \[
    \lim_{n\to+\infty}\left \lvert \frac{f(f^n(z)) - w}{f^n(z) -w}\right\rvert =\lim_{n\to+\infty}\left \lvert \frac{f(f^n(z))}{f^n(z)}\right\rvert= \angle\lim_{z\to\infty}\left \lvert \frac{f(z)}{z}\right \rvert  >1,
    \]
    which completes the proof.
\end{proof}

Using Lemma~\ref{lm:orthogonal speed lemma} we can prove that projections of hyperbolic orbits onto geodesics are eventually strictly increasing, as stated in the following corollary. Note that this result is merely a special case of our main theorem. We include a proof, however, to highlight the fact that the monotonicity of the orthogonal speed of hyperbolic maps does not require the heavy machinery of Section \ref{sect:projections}.

\begin{cor}\label{cor:orthogonal speed corollary}
	Let $f:\H\to\H$ be a hyperbolic map with Denjoy--Wolff point $\infty$. For any $w\in\H$, consider the hyperbolic geodesic $\gamma$ of $\H$ emanating from $w$ and landing at $\infty$. If $z\in\H$ and $\{\pi_\gamma(f^n(z))\}$ denotes the sequence of projections of $f^n(z)$ onto $\gamma$, then the sequence $\{d_\H(w,\pi_\gamma(f^n(z)))\}$ is eventually strictly increasing.
\end{cor}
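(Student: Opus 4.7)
The plan is to reduce the statement to a direct application of Lemma~\ref{lm:orthogonal speed lemma} by exploiting the explicit form of geodesics in $\H$ landing at $\infty$. Write $w=x_0+iy_0$. Since hyperbolic geodesics in $\H$ are either semicircles perpendicular to the imaginary axis or horizontal half-lines, the unique geodesic emanating from $w$ and landing at $\infty$ must be the horizontal half-line
\[
\gamma(t)=w+t=(x_0+t)+iy_0, \qquad t\geq 0.
\]
This identifies $\gamma$ as the portion with real part $\geq x_0$ of the full geodesic $\{s+iy_0 \colon s>0\}$, which emanates from $iy_0\in\partial\H$ and is perpendicular to the imaginary axis.

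Next, I would describe explicitly the projection onto $\gamma$. The geodesics perpendicular to $\gamma$ (i.e., meeting it orthogonally in the Euclidean sense, which is equivalent to hyperbolic orthogonality by conformality) are precisely the Euclidean semicircles centered at $iy_0$, since the tangent to such a semicircle at its intersection with the line $\mathrm{Im}\,z=y_0$ is vertical. Hence, for any $z\in\H$ with $\lvert z-iy_0\rvert\geq x_0$, the projection onto the full geodesic line, which lies inside $\gamma$, is
\[
\pi_\gamma(z)=\lvert z-iy_0\rvert+iy_0.
\]
Since $f^n(z)\to\infty$, we have $\lvert f^n(z)-iy_0\rvert\to+\infty$, so this formula applies to $\pi_\gamma(f^n(z))$ for all sufficiently large $n$.

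Then I would compute the relevant distance using the fact that vertical translations $\zeta\mapsto \zeta-iy_0$ are isometries of $d_\H$. This yields
\[
d_\H\bigl(w,\pi_\gamma(f^n(z))\bigr)=d_\H\bigl(x_0,\lvert f^n(z)-iy_0\rvert\bigr)=\tfrac12\left\lvert\log\tfrac{\lvert f^n(z)-iy_0\rvert}{x_0}\right\rvert,
\]
by the standard formula \eqref{eq:hyperbolic distance in H} applied to pairs of positive reals. For $n$ large enough that $\lvert f^n(z)-iy_0\rvert>x_0$, the right-hand side is a strictly increasing function of $\lvert f^n(z)-iy_0\rvert$.

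Finally, I would invoke Lemma~\ref{lm:orthogonal speed lemma} with the choice of base-point $iy_0\in\C$ (the foot of $\gamma$ on the imaginary axis), which directly asserts that $\{\lvert f^n(z)-iy_0\rvert\}$ is eventually strictly increasing. Composing with the strictly monotone map $r\mapsto \tfrac12\log(r/x_0)$ gives the desired eventual strict monotonicity of $\{d_\H(w,\pi_\gamma(f^n(z)))\}$. There is no serious obstacle here once the geometric picture is in place: the argument only uses that $\gamma$ is a Euclidean half-line and that Lemma~\ref{lm:orthogonal speed lemma} controls Euclidean distances to points on $\partial\H$; the somewhat delicate angular-derivative reasoning has already been absorbed into that lemma.
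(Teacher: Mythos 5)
Your proposal is correct and follows essentially the same route as the paper: identify $\gamma$ as the horizontal half-line $w+t$, note that the projection of $f^n(z)$ is $\lvert f^n(z)-i\,\mathrm{Im}\,w\rvert+i\,\mathrm{Im}\,w$ for large $n$, reduce by a vertical translation to $d_\H(\mathrm{Re}\,w,\lvert f^n(z)-i\,\mathrm{Im}\,w\rvert)$, and conclude from Lemma~\ref{lm:orthogonal speed lemma} applied with base point $i\,\mathrm{Im}\,w$. The only cosmetic difference is that you finish with the explicit formula $\tfrac12\log(r/x_0)$ for distances between positive reals, where the paper instead cites Lemma~\ref{lm: properties of metric in H}~(1).
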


\begin{proof}
    Fix $z,w\in\H$. Note that the geodesic $\gamma\colon[0,+\infty)\to\H$ joining $w$ and $\infty$ is the horizontal half-line $\gamma(t)=w+t$. Lemma \ref{lm:orthogonal speed lemma} is applicable and yields that the sequence $\{\lvert f^n(z) - i\ \mathrm{Im}\ w\rvert \}$ is eventually strictly increasing. Notice that the projection of $f^n(z)$ onto $\gamma$ is the point 
\[
\pi_\gamma(f^n(z))=\lvert f^n(z) - i\ \mathrm{Im}\ w\rvert + i\ \mathrm{Im}w,
\]
for all large $n\in\N$. So, using a vertical translation and the invariance of the hyperbolic metric under M\"obius maps we obtain
\[
d_\H(w,\pi_\gamma(f^n(z))) =d_\H(w, \lvert f^n(z) - i\ \mathrm{Im}\ w\rvert + i\ \mathrm{Im}w)=d_\H(\mathrm{Re}w, \lvert f^n(z) - i\ \mathrm{Im}\ w\rvert).
\]
So the result follows from the monotonicity of $\{\lvert f^n(z) - i\ \mathrm{Im}\ w\rvert \}$ and Lemma \ref{lm: properties of metric in H} (1).
\end{proof}

Using Lemma \ref{lm:orthogonal speed lemma} and the main result of Section \ref{sect:projections}, we can now prove Theorem \ref{thm: main result}. We are going to prove its equivalent formulation in $\H$, stated in Theorem \ref{thm: main monotonicity result} and restated below for the convenience of the reader. 

\begin{theo*}
    Let $f\colon\H\to\H$ be a hyperbolic map with Denjoy--Wolff point $\infty$, and $\gamma\colon [0,+\infty)\to\H$ a curve landing at $\infty$, satisfying
    \[
    \lim_{t\to+\infty}\mathrm{arg}(\gamma(t))=\theta\in\left(-\tfrac{\pi}{2},\tfrac{\pi}{2}\right).
    \]
    Fix $z\in\D$ and let $\pi_\gamma(f^n(z))$ be a projection of $f^n(z)$ onto $\gamma$, for $n\in\N$. Then, for any $w\in\H$, the sequence $\{d_\H(w,\pi_\gamma(f^n(z)))\}$ is eventually strictly increasing.
\end{theo*}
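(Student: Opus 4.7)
\medskip

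The plan is to reduce the problem from a general curve $\gamma$ to the simplest possible curve with landing slope $\theta$, namely the straight half-line $\eta\colon[1,+\infty)\to\H$ with $\eta(t)=te^{i\theta}$, and then to exploit the angular derivative of the hyperbolic map $f$. Since $f$ has Denjoy--Wolff point $\infty$, the orbit $\{f^n(z)\}$ converges to $\infty$ and, by Theorem \ref{thm: step and slope for hyperbolic maps}(2), lies eventually inside a sector $\{|\arg w|<\phi\}$ with $\phi\in(-\tfrac{\pi}{2},\tfrac{\pi}{2})$. Hence Theorem \ref{lm:orthogonal lemma 2}, applied to the curves $\gamma$ and $\eta$ (both landing at $\infty$ with slope $\theta$) and the sequence $\{f^n(z)\}$, yields
\[
\lim_{n\to+\infty} d_\H\bigl(\pi_\gamma(f^n(z)),\,|f^n(z)|e^{i\theta}\bigr)=0,
\]
using that the unique projection of $f^n(z)$ onto $\eta$ is $|f^n(z)|e^{i\theta}$ for all large $n$, by Lemma \ref{lm: properties of metric in H}(1).

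Next, I would establish the monotonicity of the comparison sequence $a_n\vcentcolon= d_\H(w,|f^n(z)|e^{i\theta})$ by passing to asymptotic form. From the identity \eqref{eq:cosh in H},
\[
\cosh d_\H(w,|f^n(z)|e^{i\theta})=\frac{|w+|f^n(z)|e^{-i\theta}|}{2\sqrt{\mathrm{Re}\,w}\;\sqrt{|f^n(z)|\cos\theta}}.
\]
Since $|f^n(z)|\to+\infty$, the numerator is asymptotic to $|f^n(z)|$, so that
\[
d_\H(w,|f^n(z)|e^{i\theta})=\tfrac{1}{2}\log|f^n(z)|+C(w,\theta)+o(1),
\]
for a constant $C(w,\theta)\in\R$. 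On the other hand, Lemma \ref{lm:orthogonal speed lemma} applied with the choice $w=0$ (or rather the angular derivative condition \eqref{eq: angular derivative} directly) gives $|f^{n+1}(z)|/|f^n(z)|\to f'(\infty)>1$. Combining these,
\[
a_{n+1}-a_n\;\longrightarrow\;\tfrac{1}{2}\log f'(\infty)\;>\;0.
\]

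Finally, since $d_\H\bigl(\pi_\gamma(f^n(z)),|f^n(z)|e^{i\theta}\bigr)\to 0$, the triangle inequality gives
\[
\bigl|d_\H(w,\pi_\gamma(f^n(z)))-a_n\bigr|\leq d_\H\bigl(\pi_\gamma(f^n(z)),|f^n(z)|e^{i\theta}\bigr)\longrightarrow 0,
\]
so the increments of $\{d_\H(w,\pi_\gamma(f^n(z)))\}$ differ from those of $\{a_n\}$ by a null sequence and therefore also converge to $\tfrac{1}{2}\log f'(\infty)>0$. In particular they are eventually strictly positive, proving the claim.

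The main delicate point is that neither strict monotonicity of $\{a_n\}$ alone nor the asymptotic closeness $d_\H(\pi_\gamma(f^n(z)),|f^n(z)|e^{i\theta})\to 0$ alone is enough to transfer monotonicity to $\{d_\H(w,\pi_\gamma(f^n(z)))\}$; two eventually increasing sequences differing by a vanishing term need not be monotone. What saves us is that the hyperbolic nature of $f$ forces the \emph{increments} of $a_n$ to tend to a strictly positive limit, dominating the oscillations coming from the error term. So the heart of the argument is the quantitative estimate $a_{n+1}-a_n\to\tfrac{1}{2}\log f'(\infty)$, combined with the qualitative input of Theorem \ref{lm:orthogonal lemma 2}; the remainder is bookkeeping with the triangle inequality.
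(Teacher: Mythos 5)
Your proof is correct, and its skeleton matches the paper's: both reduce the problem, via Theorem \ref{lm:orthogonal lemma 2}, to projections onto a straight half-line of slope $\theta$, and both then use hyperbolicity of $f$ to show the increments of the comparison sequence are bounded below by a positive quantity, finishing with triangle-inequality bookkeeping. Where you diverge is in how that core estimate is obtained. The paper takes the half-line $\eta(t)=w+te^{i\theta}$ through the base point $w$, translates it onto the ray $\{te^{i\theta}\}$, and then invokes Lemma \ref{lm: differences lemma} to trade $d_\H(w_0,r_ne^{i\theta})$ for $d_\H(\lvert w_0\rvert,r_n)$ along the real axis, where the increment is exactly $d_\H(r_{n+1},r_n)$; positivity of its limit then comes from the hyperbolic step (Theorem \ref{thm: step and slope for hyperbolic maps}(1)) transported via Lemma \ref{lm:sequences lemma in H}. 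You instead project onto the ray $te^{i\theta}$ through the origin, expand $d_\H(w,Re^{i\theta})=\tfrac{1}{2}\log R+C(w,\theta)+o(1)$ directly from the $\cosh$ formula \eqref{eq:cosh in H}, and feed in $\lvert f^{n+1}(z)\rvert/\lvert f^n(z)\rvert\to f'(\infty)>1$ from the angular derivative together with the non-tangential convergence of the orbit. This bypasses Lemmas \ref{lm: differences lemma} and \ref{lm:sequences lemma in H} and the hyperbolic step entirely, and it yields the sharper quantitative conclusion that the increments of $\{d_\H(w,\pi_\gamma(f^n(z)))\}$ actually converge to $\tfrac{1}{2}\log f'(\infty)$, which the paper's $\liminf$ argument does not state explicitly. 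One small presentational caveat: Lemma \ref{lm:orthogonal speed lemma} as stated only asserts eventual strict monotonicity of $\{\lvert f^n(z)-w\rvert\}$, not the ratio limit you need; but since you also cite the angular derivative condition \eqref{eq: angular derivative} directly (with Theorem \ref{thm: step and slope for hyperbolic maps}(2) guaranteeing convergence inside a sector), this is a matter of attribution rather than a gap.
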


\begin{proof}
    Fix $z,w \in \H$, and consider the straight half-line $\eta\colon[0,+\infty)\to\H$, with $\eta(t)=w+te^{i\theta}$. It is easy to see that the vertical translation $z\mapsto z-i\left(\mathrm{Im}\ w - \tan\theta\mathrm{Re}\ w\right)$ maps $\eta([0,+\infty))$ to the half-line 
    \[
    \left\{te^{i\theta}\colon t\geq \frac{\mathrm{Re}\ w}{\cos\theta}\right\}.
    \]
    Recall that by Lemma \ref{lm: properties of metric in H} the projection of a point $z_0\in\H$ onto the half-line $\{te^{i\theta}\colon t>0\}$ is $\lvert z_0\rvert e^{i\theta}$. So, the projection of $f^n(z)$ onto $\eta$ is the point 
    \begin{equation}\label{eq:orthogonal projection, eq0}
    \pi_\eta(f^n(z))=\lvert f^n(z)-i\left(\mathrm{Im}\ w - \tan\theta\mathrm{Re}\ w\right)\rvert e^{i\theta} + i\left(\mathrm{Im}\ w - \tan\theta\mathrm{Re}\ w\right).
    \end{equation}
    To simplify our notation, we write $\pi_\eta(n)=\pi_\eta(f^n(z))$ and $\pi_\gamma(n)=\pi_\gamma(f^n(z))$, for all $n\in\N$. Our goal is to show that $\{d_\H(w,\pi_\gamma(n)\}$ is eventually strictly increasing. Observe that is suffices to show that 
    \begin{equation}\label{eq:orthogonal projection, eq+}
        \liminf_{n\to+\infty} \left(d_\H(w,\pi_\gamma(n+1)) - d_\H(w,\pi_\gamma(n))\right) >0.
    \end{equation}
    By the triangle inequality we get that
    \begin{align*}
    &d_\H(w,\pi_\gamma(n+1)) - d_\H(w,\pi_\gamma(n)) \geq \\
    &\geq d_\H(w,\pi_\eta(n+1)) - d_\H(\pi_\eta(n+1), \pi_\gamma(n+1)) - d_\H(w,\pi_\eta(n)) - d_\H(\pi_\eta(n), \pi_\gamma(n))\\
    &= d_\H(w,\pi_\eta(n+1)) - d_\H(w,\pi_\eta(n)) -d_\H(\pi_\eta(n+1), \pi_\gamma(n+1)) - d_\H(\pi_\eta(n), \pi_\gamma(n)).
    \end{align*}
    Because $\lim\limits_{t\to+\infty}\mathrm{arg}(\eta(t))=\lim\limits_{t\to+\infty}\mathrm{arg}(\gamma(t))=\theta$, Theorem \ref{lm:orthogonal lemma 2} is applicable and yields that
    \[
    \lim_{n\to+\infty}d_\H(\pi_\eta(n),\pi_\gamma(n))=\lim_{n\to+\infty}d_\H(\pi_\eta(n+1),\pi_\gamma(n+1))=0.
    \]
    So, in order to prove our goal, \eqref{eq:orthogonal projection, eq+}, it suffices to show that 
    \begin{equation}\label{eq:orthogonal projection, eq1}
        \liminf_{n\to+\infty} \left(d_\H(w,\pi_\eta(n+1))- d_\H(w,\pi_\eta(n))\right)>0.
    \end{equation}
    Write 
    \begin{equation}\label{eq:orthogonal projection, eq-}
        f^n(z)-i(\mathrm{Im}\ w - \tan\theta\mathrm{Re}\ w)=r_ne^{i\theta_n},
    \end{equation}
    and 
    \begin{equation}\label{eq:orthogonal projection, eq--}
        w_0=w-i\left(\mathrm{Im} \ w -\tan\theta\ \mathrm{Re}\ w\right)=\mathrm{Re}w+i \tan\theta\ \mathrm{Re}w.
    \end{equation}
    See Figure \ref{fig:main monotonicity result} for the construction. Using the translation $z\mapsto z- i(\mathrm{Im}\ w - \tan\theta\mathrm{Re}\ w)$, the invariance of the hyperbolic metric under M\"obius maps and \eqref{eq:orthogonal projection, eq0}, we obtain
    \[
        d_\H(w,\pi_\eta(f^n(z))) =d_\H(w_0, r_ne^{i\theta}).
    \]
    So, \eqref{eq:orthogonal projection, eq1} can be rewritten as
    \begin{equation}\label{eq:orthogonal projection, eq++}
       \liminf_{n\to+\infty} \left(d_\H(w_0, r_{n+1} e^{i\theta} ) - d_\H(w_0, r_n e^{i\theta} )\right)>0.
    \end{equation}
    \begin{figure}[ht]
        \centering
        \begin{tikzpicture}[scale=1.5]
        \coordinate (r) at (1,0);
        \coordinate (o) at (0,0);
        \coordinate (theta) at (1,1);
            \draw[->] (0,-0.3) -- (0,3);
            \draw[->] (0,0) -- (3,0);
            \draw   (0.5,1.5) -- (2,3);
            \node[below] at (2,3) {$\eta$};
            \draw[dashed] (0,0) -- (1.5,1.5);
            \draw[dotted] (0.5,1.5) -- (.5,.5);
            \draw[dotted] (1.414,2.414) -- (1.414,1.414);
            
            \filldraw[black] (0.5,1.5) circle (.5pt) node[left] {$w$};
            \filldraw[black] (.5,.5) circle (.5pt) node[above left] {$w_0$};
            \filldraw[black] (1.414,2.414) circle (.5pt) node[right] {$\pi_\eta(f^n(z))$};
            \draw [domain=45:60] plot ({2*cos(\x)}, {2*sin(\x)+1});
            \filldraw[black] (1,2.732) circle (.5pt) node[above] {$f^n(z)$};
            \filldraw[black] (1.414,1.414) circle (.5pt) node[right] {$r_ne^{i\theta}$};
            
            \pic [draw, "$\theta$", angle eccentricity=1.5] {angle = r--o--theta};
        \end{tikzpicture}
        \caption{The framework for the proof of Theorem \ref{thm: main monotonicity result}.}
        \label{fig:main monotonicity result}
    \end{figure}
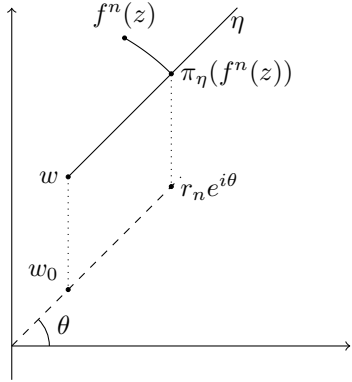
    Now, recalling \eqref{eq:orthogonal projection, eq-}, Lemma \ref{lm:orthogonal speed lemma} implies that the sequence 
    \[
    \{r_n\}=\{\lvert f^n(z)-i(\mathrm{Im}\ w - \tan\theta\mathrm{Re}\ w)\rvert\},
    \]
    is eventually strictly increasing, and $r_n\to+\infty$ as $n\to+\infty$.\\
    Observe that due to \eqref{eq:orthogonal projection, eq--} the point $w_0$ lies in the half-line $\{te^{i\theta}\colon t>0\}$. So, $w_0=\lvert w_0 \rvert e^{i\theta}$. This means that we can apply Lemma \ref{lm: differences lemma} to the sequence $\{r_n\}$ to obtain that 
    \begin{equation}\label{eq:orthogonal projection, eq extra1}
        \lim_{n\to+\infty} \left(d_\H(w_0,r_ne^{i\theta}) -d_\H(\lvert w_0 \rvert, r_n)\right)=-\log(\cos\theta)\in[0,+\infty).
    \end{equation}
    We also have that 
    \begin{align*}
        d_\H(w_0,r_{n+1}e^{i\theta}) &- d_\H(w_0,r_ne^{i\theta})=\\
        &= d_\H(w_0,r_{n+1}e^{i\theta}) - d_\H(\lvert w_0 \rvert, r_{n+1}) -d_\H(w_0,r_ne^{i\theta}) +d_\H(\lvert w_0 \rvert, r_n) +\\
        &\ \ \   + d_\H(\lvert w_0 \rvert, r_{n+1}) - d_\H(\lvert w_0 \rvert, r_n).
    \end{align*}
    Therefore, applying \eqref{eq:orthogonal projection, eq extra1} to the above equation, we get
    \[
    \liminf_{n\to+\infty}\left( d_\H(w_0,r_{n+1}e^{i\theta}) - d_\H(w_0,r_ne^{i\theta})\right) = \liminf_{n\to+\infty} \left(d_\H(\lvert w_0 \rvert, r_{n+1}) - d_\H(\lvert w_0 \rvert, r_n)\right).
    \]
    So, \eqref{eq:orthogonal projection, eq++} is equivalent to 
    \begin{equation}\label{eq:orthogonal projection, eq extra2}
        \liminf_{n\to+\infty} \left(d_\H(\lvert w_0 \rvert, r_{n+1}) - d_\H(\lvert w_0 \rvert, r_n)\right)>0.
    \end{equation}
    Summarizing the work in this proof so far, we have that in order to show the desired result it suffices to establish \eqref{eq:orthogonal projection, eq extra2}.\\
    Because $\{r_n\}$ is eventually strictly increasing and the points $\lvert w_0 \rvert$, $r_n$ lie on a geodesic of $\H$ (namely, the real axis), we have that 
    \begin{equation}\label{eq:orthogonal projection, eq extra3}
        d_\H(\lvert w_0 \rvert, r_{n+1}) - d_\H(\lvert w_0 \rvert, r_n) = d_\H(r_{n+1}, r_n), \quad \text{for all $n$ large}.
    \end{equation}
    Moreover, since $f$ is hyperbolic, Theorem \ref{thm: step and slope for hyperbolic maps} implies that there exist $d_0>0$ and $\phi\in(-\tfrac{\pi}{2},\tfrac{\pi}{2})$ so that
    \[
    \lim_{n\to+\infty}d_\H(r_{n+1}e^{i\theta_{n+1}}, r_ne^{i\theta_n}) = \lim_{n\to+\infty} d_\H(f^{n+1}(z), f^n(z))=d_0,
    \]
    and
    \[
    \lim_{n\to+\infty}\theta_n=\lim_{n\to+\infty}\mathrm{arg}(r_ne^{i\theta_n})= \lim_{n\to+\infty}\mathrm{arg}(f^n(z))=\phi.
    \]
    These last two limits imply that Lemma \ref{lm:sequences lemma in H} is applicable to the sequence $\{r_ne^{i\theta}\}$ and yields a constant $d>0$ so that 
    \begin{equation}\label{eq:orthogonal projection, eq extra4}
        \lim_{n\to+\infty}d_\H(r_{n+1},r_n)=d.
    \end{equation}
    Combining \eqref{eq:orthogonal projection, eq extra3} with \eqref{eq:orthogonal projection, eq extra4} yields \eqref{eq:orthogonal projection, eq extra2}, as required. 
   \end{proof}

   With our main result proved, we move on to its corollary, Corollary \ref{cor: main cor}. As usual, we prove its right half-plane version, Corollary \ref{cor: main cor H version}, restated below.

   \begin{cor*}
       Let $f\colon\H\to\H$ be a hyperbolic map with Denjoy--Wolff point $\infty$. For any $z,w\in\H$, the sequence $\{d_\H(w,f^n(z))\}$ is eventually strictly increasing.
   \end{cor*}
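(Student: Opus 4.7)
The plan is to deduce Corollary \ref{cor: main cor H version} directly from Theorem \ref{thm: main monotonicity result} by constructing an auxiliary curve $\gamma$ that passes through every iterate of $z$, so that $f^n(z)$ itself becomes admissible as the projection $\pi_\gamma(f^n(z))$. This way the corollary becomes a direct specialisation of the main theorem.

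Fix $z\in\H$. By Theorem \ref{thm: step and slope for hyperbolic maps}~(2), the limit
\[
\phi\vcentcolon=\lim_{n\to+\infty}\mathrm{arg}(f^n(z))
\]
exists and lies in $(-\tfrac{\pi}{2},\tfrac{\pi}{2})$. Since $f$ has no fixed points in $\H$, the iterates $f^n(z)$ are pairwise distinct. Define $\gamma\colon[0,+\infty)\to\H$ to be the Euclidean piecewise-linear curve with $\gamma(n)=f^n(z)$ for every $n\in\N$, linear on each interval $[n,n+1]$. Since $\H$ is convex in $\C$, $\gamma$ takes values in $\H$.

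The next step is to verify the two hypotheses of Theorem \ref{thm: main monotonicity result}. For the landing at $\infty$: on each segment $[f^n(z),f^{n+1}(z)]$ one has $\mathrm{Re}\,\gamma(t)\geq\min\{\mathrm{Re}\,f^n(z),\mathrm{Re}\,f^{n+1}(z)\}$, and $\mathrm{Re}\,f^n(z)\to+\infty$ because $|f^n(z)|\to+\infty$ and $\mathrm{arg}(f^n(z))\to\phi\in(-\tfrac{\pi}{2},\tfrac{\pi}{2})$; hence $|\gamma(t)|\to+\infty$. For the slope condition, one uses the elementary fact that for $p,q\in\H$ and $\lambda,\mu\geq0$ not both zero, $\mathrm{arg}(\lambda p+\mu q)$ lies between $\mathrm{arg}(p)$ and $\mathrm{arg}(q)$ (a direct consequence of $\mathrm{Im}((\lambda p+\mu q)e^{-i\,\mathrm{arg}\,p})\ge0$ when $\mathrm{arg}\,p\le\mathrm{arg}\,q$, both in $(-\tfrac{\pi}{2},\tfrac{\pi}{2})$). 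Applied segmentwise, this sandwiches $\mathrm{arg}(\gamma(t))$ between $\mathrm{arg}(f^n(z))$ and $\mathrm{arg}(f^{n+1}(z))$ on $[n,n+1]$, so $\mathrm{arg}(\gamma(t))\to\phi$.

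Finally, since $f^n(z)\in\gamma([0,+\infty))$ trivially realizes $d_\H(f^n(z),\gamma)=0$, we may take $\pi_\gamma(f^n(z))\vcentcolon=f^n(z)$ as a projection of $f^n(z)$ onto $\gamma$. Applying Theorem \ref{thm: main monotonicity result} with this $\gamma$ and this choice of projections, for any $w\in\H$ the sequence $\{d_\H(w,\pi_\gamma(f^n(z)))\}=\{d_\H(w,f^n(z))\}$ is eventually strictly increasing, which is exactly the claim. The main conceptual obstacle is recognising that the generality of Theorem \ref{thm: main monotonicity result}---which allows arbitrary curves and arbitrary choices of projection---is precisely what makes the projection step collapse under an appropriate choice of $\gamma$; the individual verifications are then elementary.
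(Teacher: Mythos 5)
Your proposal is correct and follows essentially the same route as the paper: both construct the piecewise-linear curve through the iterates $f^n(z)$, note that $\lim_{t\to+\infty}\arg(\gamma(t))=\phi\in(-\tfrac{\pi}{2},\tfrac{\pi}{2})$ by Theorem \ref{thm: step and slope for hyperbolic maps}, choose $\pi_\gamma(f^n(z))=f^n(z)$, and invoke Theorem \ref{thm: main monotonicity result}. Your segmentwise verification of the landing and slope conditions simply fills in details the paper leaves implicit.
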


    \begin{proof}
        Fix $z,w\in\H$. Since $f$ is hyperbolic, Theorem \ref{thm: step and slope for hyperbolic maps} (2) implies that there exists $\phi\in(-\tfrac{\pi}{2},\tfrac{\pi}{2})$, depending only on $f$ and $z$, so that
        \begin{equation}\label{eq: main cor, eq}
            \lim_{n\to+\infty}\arg(f^n(z))=\phi.
        \end{equation}
        Let $\ell_n=[f^n(z),f^{n+1}(z)]$ be the Euclidean line segment joining $f^n(z)$ and $f^{n+1}(z)$, for $n\in\N$. We obviously have that $\ell_n\subset \H$, for all $n\in\N$. Let $\gamma\colon [0,+\infty)\to\H$ be a curve with trace $\gamma([0,+\infty))=\bigcup_{n\in\N}\ell_n$. Since the sequence of iterates $\{f^n(z)\}$ converges to $\infty$, we can parametrise $\gamma$ so that $\lim_{t\to+\infty}\gamma(t)=\infty$. Moreover, \eqref{eq: main cor, eq} implies that $\lim_{t\to+\infty}\arg(\gamma(t))=\phi$. Therefore, $\gamma$ satisfies the assumptions of Theorem \ref{thm: main monotonicity result} and yields that the sequence $\{d_\H(w,\pi_\gamma(f^n(z)))\}$ is eventually strictly increasing. But, by construction $f^n(z)\in\gamma([0,+\infty))$, for all $n\in\N$, meaning that $\pi_\gamma(f^n(z))=f^n(z)$ and the proof is complete.
    \end{proof}

\section{Concluding remarks}\label{sect: concluding remarks}

Recall that in Example \ref{ex: parabolic map} we showed that Theorem \ref{thm: main result} does not hold if one replaces the hyperbolic map with a parabolic. However, we can see that in this particular counterexample, the sequences $\{d_\H(1,\pi_\delta(f^n(1)))\}$ and $\{d_\H(1,\pi_\gamma(g^n(1)))\}$ we constructed for the zero and positive hyperbolic step, respectively, are eventually increasing (but not strictly). It would be interesting to examine whether this behaviour is the norm for parabolic maps, or merely a product of our construction. A first step would be to consider projections of parabolic orbits onto hyperbolic geodesics. In other words, we ask if the orthogonal speed of parabolic maps is eventually increasing. We now present simple arguments that provide a positive answer to this question for parabolic maps of positive hyperbolic step. 

It is known \cite[Theorem 1]{Pom-parabolic} that for a parabolic map of positive hyperbolic step, we have
\begin{equation}\label{eq: positive parabolic}
    \lim_{n\to+\infty}\frac{\mathrm{Im}\ (f^{n+1}(1)) -\mathrm{Im}\ (f^n(1))}{\mathrm{Re}\ (f^n (1))} =b\in\R^*.
\end{equation}
Moreover, the sequence $\{\mathrm{Im}\ (f^n(1))\}$ is either eventually positive, or eventually negative (see \cite[Remark 2.3]{CCZRP 2}), whereas the sequence $\{\mathrm{Re}\ (f^n(1))\}$ is non-decreasing by Julia's Lemma (see \cite[Section 2.1]{Abate}). Combining these two facts with \eqref{eq: positive parabolic}, we obtain that the sequence $\{|\mathrm{Im}\ (f^n(1))|\}$ is eventually strictly increasing, and so the same holds for $\{\lvert f^n(1)\rvert\}$. Hence, if $\gamma\colon[0,+\infty) \to \H$ is the hyperbolic geodesic with $\gamma(t)=1+t$, then $\pi_\gamma(f^n(1))=\lvert f^n(1)\rvert$, meaning that $\{d_\H(1,\pi_\gamma(f^n(1)))$\} is eventually strictly increasing.

Similarly, one could check that in Example \ref{ex: hyperbolic semigroup}, the function $t\mapsto d_\H(1,\pi_\delta(\phi_t(1)))$ is also eventually increasing. Therefore, it might be possible to show that Theorem \ref{thm: main result} holds with a hyperbolic semigroup in place of a the hyperbolic map $f$, and with the conclusion ``eventually increasing" instead of ``eventually strictly increasing".

To discuss a final question that arises from our work, let us first formally define the slope of a curve in $\D$. 

Let $\gamma:[0,+\infty)\to \D$ be a curve landing at a boundary point $\tau \in\partial\D$. The \emph{slope of $\gamma$} is the set $\mathrm{Slope}(\gamma)$ consisting of all $\theta\in[-\tfrac{\pi}{2},\tfrac{\pi}{2}]$, for which there exists a sequence $\{t_n\}\subset[0,+\infty)$ such that $\arg(1-\overline{\zeta}\gamma(t_n))\to\theta$.

Consider a hyperbolic map $f\colon\D\to\D$. With the above terminology, Theorem \ref{thm: main result} yields the eventual monotonicity of the sequence $\{d_\D(w,\pi_\gamma(f^n(z)))\}$ under the assumption that $\mathrm{Slope}(\gamma)=\{\theta\}$, for some $\theta\in(-\tfrac{\pi}{2},\tfrac{\pi}{2})$. Also, Example \ref{ex: tangential slope example} shows that the monotonicity of projections described in Theorem \ref{thm: main result} fails if we assume that $\mathrm{Slope}(\gamma)$ is either $\{-\tfrac{\pi}{2}\}$ or $\{\tfrac{\pi}{2}\}$.

However, as we remarked in Section \ref{sect:projections}, right after Lemma \ref{lm:convergence of projections to zeta}, if we assume that there exists $\theta\in (-\tfrac{\pi}{2},\tfrac{\pi}{2})$ so that $\theta\in\mathrm{Slope}(\gamma)$, then the projections $\pi_\gamma(f^n(z))$ will still converge to $\tau$. Therefore, it makes sense to study the monotonicity of $\{d_\D(w,\pi_\gamma(f^n(z)))\}$ in this broader setting. The key points of our analysis in Section \ref{sect: monotonicity} revolved around the fact that each orbit $\{f^n(z)\}$ tends to $\tau$ with a fixed slope, and that hyperbolic maps have positive step, described in Theorem \ref{thm: step and slope for hyperbolic maps}. These remain intact even in the setting of curves with ``wild" slope. Our main technical result, however, Theorem \ref{lm:orthogonal lemma 2}, cannot be used to substitute the projections on such $\gamma$ with projections on a simpler curve. Finally, preliminary examples also show that the monotonicity of projections in this case, if it indeed holds, will no longer be strict.

\section*{Acknowledgements}
We thank M. D. Contreras, S. D\'iaz--Madrigal and L. Rodr\'{i}guez-Piazza as well as C. Papadimitriou and G. Polychrou for the helpful discussions and comments. We are also grateful to F. Cruz--Zamorano for pointing us to the simple proof of Corollary \ref{cor:orthogonal speed corollary} presented here.

    \begin{bibdiv}
\begin{biblist}

\bib{Abate}{book}{
   author={Abate, Marco},
   title={Holomorphic dynamics on hyperbolic Riemann surfaces},
   series={De Gruyter Studies in Mathematics},
   volume={89},
   publisher={De Gruyter, 
Berlin},
   date={2023},
   pages={xiii+356}
}

\bib{BFJK}{article}{
   author={Bara\'nski, Krzysztof},
   author={Fagella, N\'uria},
   author={Jarque, Xavier},
   author={Karpi\'nska, Bogus\l awa},
   title={Accesses to infinity from Fatou components},
   journal={Trans. Amer. Math. Soc.},
   volume={369},
   date={2017},
   number={3},
   pages={1835--1867}
}

\bib{Beardon}{book}{
   author={Beardon, Alan F.},
   title={The geometry of discrete groups},
   series={Graduate Texts in Mathematics},
   volume={91},
   note={Corrected reprint of the 1983 original},
   publisher={Springer-Verlag, New York},
   date={1995},
   pages={xii+337}
}

\bib{Beardon-Minda}{article}{
   author={Beardon, A. F.},
   author={Minda, D.},
   title={Geometric Julia-Wolff theorems for weak contractions},
   journal={Comput. Methods Funct. Theory},
   volume={23},
   date={2023},
   number={4},
   pages={741--770}
}

\bib{Evdoridou}{article}{
   author={Benini, Anna Miriam},
   author={Evdoridou, Vasiliki},
   author={Fagella, N\'{u}ria},
   author={Rippon, Philip J.},
   author={Stallard, Gwyneth M.},
   title={Boundary dynamics for holomorphic sequences, non-autonomous dynamical systems and wandering domains},
   journal={Adv. Math.},
   volume={446},
   date={2024},
   pages={Paper No. 109673, 51}
}

\bib{BB}{article}{
   author={Benini, Anna Miriam},
   author={Bracci, Filippo},
   title={The Denjoy-Wolff Theorem in simply connected domains},
   eprint={https://arxiv.org/abs/2409.11722},
   date={2024}
}

\bib{BK}{article}{
   author={Betsakos, Dimitrios},
   author={Karamanlis, Nikolaos},
   title={On the monotonicity of the speeds for semigroups of holomorphic
   self-maps of the unit disk},
   journal={Trans. Amer. Math. Soc.},
   volume={377},
   date={2024},
   number={2},
   pages={1299--1319}
}
	
\bib{Bracci-Speeds}{article}{
   author={Bracci, Filippo},
   title={Speeds of convergence of orbits of non-elliptic semigroups of
   holomorphic self-maps of the unit disk},
   journal={Ann. Univ. Mariae Curie-Sk\l odowska Sect. A},
   volume={73},
   date={2019},
   number={2},
   pages={21--43}
}
	
\bib{BCK}{article}{
   author={Bracci, Filippo},
   author={Cordella, Davide},
   author={Kourou, Maria},
   title={Asymptotic monotonicity of the orthogonal speed and rate of
   convergence for semigroups of holomorphic self-maps of the unit disc},
   journal={Rev. Mat. Iberoam.},
   volume={38},
   date={2022},
   number={2},
   pages={527--546}
}

\bib{BCDM-Book}{book}{
   author={Bracci, Filippo},
   author={Contreras, Manuel D.},
   author={D\'iaz-Madrigal, Santiago},
   title={Continuous semigroups of holomorphic self-maps of the unit disc},
   series={Springer Monographs in Mathematics},
   publisher={Springer, Cham},
   date={2020},
   pages={xxvii+566}
}

\bib{BCDG}{article}{
   author={Bracci, Filippo},
   author={Contreras, Manuel D.},
   author={D\'iaz-Madrigal, Santiago},
   author={Gaussier, Herv\'e},
   title={Non-tangential limits and the slope of trajectories of holomorphic
   semigroups of the unit disc},
   journal={Trans. Amer. Math. Soc.},
   volume={373},
   date={2020},
   number={2},
   pages={939--969}
}

\bib{Bracci-Roth}{article}{
   author={Bracci, Filippo},
   author={Roth, Oliver},
   title={Semigroup-fication of univalent self-maps of the unit disc},
   language={English, with English and French summaries},
   journal={Ann. Inst. Fourier (Grenoble)},
   volume={73},
   date={2023},
   number={1},
   pages={251--277}
}

\bib{CCZRP 1}{article}{
   author={Contreras, Manuel D.},
   author={Cruz-Zamorano, Francisco J.},
   author={Rodr\'{i}guez-Piazza, Lu\'{i}s},
   title={Examples in discrete iteration of arbitrary intervals of slopes},
   journal={J. Geom. Anal.},
   volume={35},
   date={2025},
   number={3},
   pages={Paper No. 99,15}
}

\bib{CCZRP 2}{article}{
   author={Contreras, Manuel D.},
   author={Cruz-Zamorano, Francisco J.},
   author={Rodr\'{i}guez-Piazza, Lu\'{i}s},
   title={The slope problem in discrete iteration},
   journal={Discrete Cont. Dyn. Syst.},
   volume={45},
   date={2025},
   number={6},
   pages={1928--1947}
}

\bib{CGDM}{article}{
   author={Contreras, Manuel D.},
   author={D\'{i}az-Madrigal, Santiago},
   author={Gumenyuk, Pavel},
   title={Criteria for extension of commutativity to fractional iterates of holomorphic self-maps in the unit disc},
   journal={J. Lond. Math. Soc. (2)},
   volume={111},
   date={2025},
   number={2},
   pages={Paper No. e70077, 31}
}

\bib{Cordella 1}{article}{
   author={Cordella, Davide},
   title={Asymptotic upper bound for tangential speed of parabolic semigroups of holomorphic self-maps in the unit disc},
   journal={Ann. Mat. Pura Appl. (4)},
   volume={200},
   date={2021},
   number={6},
   pages={2767--2784}
}

\bib{Cordella 2}{article}{
   author={Cordella, Davide},
   title={Holomorphic semigroups of finite shift in the unit disc},
   journal={J. Math. Anal. Appl. (4)},
   volume={513},
   date={2022},
   number={2},
   pages={Paper No. 126213, 17 pp.}
}

\bib{KZ}{article}{
   author={Kourou, Maria},
   author={Zarvalis, Konstantinos},
   title={Speeds of convergence for petals of semigroups of holomorphic functions},
   journal={Ann. Fenn. Math.},
   volume={49},
   date={2024},
   number={1},
   pages={119--134}
}

\bib{Pom-parabolic}{article}{
   author={Pommerenke, Ch.},
   title={On the iteration of analytic functions in a halfplane},
   journal={J. London Math. Soc. (2)},
   volume={19},
   date={1979},
   number={3},
   pages={439--447}
}

\bib{Pommerenke}{book}{
   author={Pommerenke, Ch.},
   title={Boundary behaviour of conformal maps},
   series={Grundlehren der mathematischen Wissenschaften [Fundamental
   Principles of Mathematical Sciences]},
   volume={299},
   publisher={Springer-Verlag, Berlin},
   date={1992},
   pages={x+300}
}

\bib{Wolff}{article}{
   author={Wolff, J.},
   title={Sur l'it\'eration des fonctions holomorphes dans un demi-plan},
   language={French},
   journal={Bull. Soc. Math. France},
   volume={57},
   date={1929},
   pages={195--203}
}

\bib{Zar-Tangential}{article}{
   author={Zarvalis, Konstantinos},
   title={On the tangential speed of parabolic semigroups of holomorphic functions},
   journal={Proc. Amer. Math. Soc.},
   volume={149},
   date={2021},
   number={2},
   pages={729--737}
}
	
\end{biblist}
\end{bibdiv}

\end{document}